\documentclass[12pt,a4paper,oneside]{amsart}
\usepackage{amsfonts, amsmath, amssymb, amsthm, mathtools}
\usepackage[colorlinks=true,citecolor=blue]{hyperref}
\usepackage[margin=1.4in]{geometry}
\usepackage[T1]{fontenc}
\usepackage{newtxtext}
\usepackage{newtxmath}
\usepackage{graphicx}
\usepackage{verbatim}
\usepackage{tikz}
\usepackage{pgfplots}
\pgfplotsset{compat=1.18}
\definecolor{cChordal}{RGB}{31,119,180}
\definecolor{cHolmsen}{RGB}{214,39,40}
\definecolor{cLow}{RGB}{106,61,154}
\definecolor{cHigh}{RGB}{0,0,0}
\usepackage{fix-cm}
\usepackage{tikz-cd}
\usetikzlibrary{calc,matrix}

\makeatletter
\renewcommand{\paragraph}{%
  \@startsection{paragraph}{4}{\z@}%
    {\baselineskip}{-0.5em}%
    {\normalfont\bfseries}}
\makeatother

\newtheorem{theorem}{Theorem}
\newtheorem*{theorem*}{Theorem}
\newtheorem{lemma}[theorem]{Lemma}
\newtheorem*{lemma*}{Lemma}
\newtheorem{prop}[theorem]{Proposition}

\newtheorem{corollary}[theorem]{Corollary}

\newtheorem*{corollary*}{Corollary}

\theoremstyle{definition}

\newtheorem{remark}[theorem]{Remark}
\newtheorem*{remark*}{Remark}
\newtheorem{example}[theorem]{Example}

\newtheorem{question}[theorem]{Question}
\newtheorem*{problem*}{Problem}

\newtheorem*{conj*}{Conjecture}

\newcommand{\z}{\raisebox{0.3ex}{$\cdot$}}   

\begin{document} 

\title{Clique number and triangle densities in $C_4$-free graphs}  

\author{Gunnar Fl\o ystad}
\address{Gunnar Fl\o ystad,
Matematisk institutt,
Universitetet i Bergen,
Bergen, Norway.}
\email{gunnar.floystad@uib.no}

\author{Andreas F. Holmsen}
\thanks{The second author was supported by the Institute for Basic Science (IBS-R029-C1)}
\address{Andreas F. Holmsen,
Department of Mathematical Sciences,
KAIST,
Daejeon, South Korea 
 \and 
Discrete Mathematics Group,  Institute for Basic Sciences (IBS), Daejeon, South Korea. }
\email{andreash@kaist.edu}

\date{\today}

\begin{abstract} 
For a $C_4$-free graph $G$ on $n$ vertices --- one with no induced cycle on
four vertices --- we study the two-sided extremal problem for the triangle
density $\tau$: How large and how small can $\tau$ be for given edge density
$\varepsilon$ and clique-number density $\kappa = \omega(G)/n$? 

We give lower and upper bounds for $\tau$ in terms of $\kappa$ and $\varepsilon$.
The two bounds sandwich $\tau$, and their compatibility forces a lower bound
for $\kappa$ in terms of $\varepsilon$. When the clique complex of $G$ is $2$-Leray
over a field $\Bbbk$, the resulting bound on the clique-number density lies between the previous best $C_4$-free bound and
the sharp chordal bound. It improves on the former {\it for every} $\varepsilon \in (0,1)$.

The lower bound is elementary. The upper bound is homological,
obtained by passing to the Stanley--Reisner ring of the clique complex. When the
complex is $2$-Leray, its Betti table has at most two linear strands.
The two first entries in the first strand encode edge and triangle densities, and the
strong structural form of a Boij--Söderberg decomposition constrains what
these entries can be, yielding the upper bound.

For $2$-Leray graphs with no holes in the range $[4,g]$ we give a conjecturally sharp bound. We further ask questions concerning the triangle bound for any $C_4$-free graph.
\end{abstract}

\maketitle 

\section{Introduction}

\subsection*{The clique-number density problem}

How large a clique must a dense graph contain if induced squares are
forbidden? For a general graph, positive edge density does not force a
clique containing a positive proportion of the vertices. Indeed, if
$G$ is a graph on $n$ vertices with \emph{edge density}
\[
    \varepsilon
    =
    \frac{|E(G)|}{\binom{n}{2}},
\]
then Tur\'an's theorem \cite{Turan1941} gives a lower bound on the
\emph{clique number} $\omega(G)$, the order of the largest complete
subgraph of $G$, depending on $\varepsilon$; but for fixed
$\varepsilon\in(0,1)$ this bound remains independent of $n$.
Consequently, Tur\'an's theorem gives no positive lower bound on the
\emph{clique-number density}
\[
    \kappa=\frac{\omega(G)}{n}
\]
as $n\to\infty$.

The situation changes when induced subgraphs are restricted. We say that a graph
is \emph{$C_4$-free} if it contains no induced cycle on four
vertices. Erd\H{o}s asked whether, for every $\varepsilon>0$, there
exists $f(\varepsilon)>0$ such that every $C_4$-free graph of edge
density $\varepsilon$ satisfies
\begin{equation}\label{eq:C4 free}
    \kappa\geq f(\varepsilon).
\end{equation}
Thus forbidding a single induced configuration on four
vertices would force a global clique of linear size.

Gy\'arf\'as, Hubenko, and Solymosi \cite{GyarfasHubenkoSolymosi2002} answered Erd\H{o}s's question
affirmatively, showing that \eqref{eq:C4 free} holds with
$f(\varepsilon) = \varepsilon^2/10$, and that the quadratic order is best possible as $\varepsilon\to0$. A later bound, due to
the second author \cite{Holmsen2020} and valid throughout the whole interval
$0<\varepsilon<1$, is
\begin{equation}\label{eq:holmsen}
    \kappa
    \geq
    \bigl(1-\sqrt{1-\varepsilon}\bigr)^2.
\end{equation}
The optimal function $f$ is nevertheless unknown.

The role of $C_4$ here is not incidental. Gy\'arf\'as, Hubenko, and Solymosi
also show
\cite[Proposition 1]{GyarfasHubenkoSolymosi2002} that for any graph $H$ which is
not an induced subgraph of $C_4$ there exist $H$-free graphs on $n$ vertices with
at least $n^2/4$ edges and clique number $o(n)$. Indeed, if $H$ has three pairwise
non-adjacent vertices, a Ramsey graph containing no three independent vertices
will serve; otherwise a balanced complete bipartite graph does. 
Thus among all single forbidden induced subgraphs $C_4$ is the unique maximal one for which there is a
bound of the form \eqref{eq:C4 free}.

A useful comparison is provided by chordal graphs. Recall that a graph
is \emph{chordal} if it contains no induced cycle of length greater
than three. For chordal graphs the sharp estimate (see e.g. \cite{AbbottKatchalski1979, GyarfasHubenkoSolymosi2002}) is
\begin{equation}\label{eq:chordal}
    \kappa
    \geq
    1-\sqrt{1-\varepsilon}.
\end{equation}
Since every
chordal graph is $C_4$-free, the extremal clique-density curve for
$C_4$-free graphs must lie between the chordal bound \eqref{eq:chordal}
and the best presently known general $C_4$-free bound
\eqref{eq:holmsen}. The aim of this paper is to narrow this gap for a
natural homologically defined subclass of $C_4$-free graphs, and to
introduce a method relating the extremal graph problem to the Betti
table of the clique complex.

The two classes are in fact endpoints of a spectrum. Chordality forbids induced cycles
of every length greater than three; forbidding only those of length at most $g$ 
gives an intermediate class (with the same homological assumption), and in Proposition~\ref{prop:holes} we prove 
the bound 
\[\kappa \geq 1-\sqrt{g/(g-2)}\,\sqrt{1-\varepsilon}.\] 

\subsection*{Triangle density as an intermediate parameter}

Our approach is to study the \emph{triangle density} rather than the
clique number directly. Write
\[
    \tau
    =
    \frac{(\text{number of triangles in }G)}{\binom{n}{3}}.
\]
We prove
inequalities of the form
\[
    \tau_-(\varepsilon,\kappa)
    \leq
    \tau
    \leq
    \tau_+(\varepsilon,\kappa).
\]
The lower estimate is combinatorial and holds for every $C_4$-free
graph. The upper estimate is homological and is proved when the clique
complex of $G$ is \emph{$2$-Leray over $\Bbbk$}, that is, when all its
induced subcomplexes have vanishing reduced homology with coefficients in
$\Bbbk$ in dimensions two and higher. 
(Note that $1$-Leray graphs are precisely the chordal graphs, by Fr\"oberg's theorem
\cite[Thm.9.2.3]{HerzogHibi}, \cite{Fro90}.)
Here and throughout, $\Bbbk$ is a fixed but
arbitrary field; homology, Betti numbers, the Leray condition, and the
Stanley--Reisner ring are all taken over it. Any field will do, provided the same
one is used throughout.
Since the same triangle density must satisfy both estimates,
the compatibility condition
$\tau_-(\varepsilon,\kappa)\leq \tau_+(\varepsilon,\kappa)$ forces a lower bound of the form \eqref{eq:C4 free}, that is, a lower bound on $\kappa$ in terms of $\varepsilon$.

\medskip

\noindent \emph{The lower bound.} The combinatorial mechanism behind the lower estimate is particularly
simple. If $u$ and $w$ are nonadjacent vertices of a $C_4$-free graph,
then their common neighbors form a clique: two nonadjacent common
neighbors would form an induced $C_4$ together with $u$ and $w$.
Thus the number of induced $2$-paths with a given nonadjacent pair of
endpoints is controlled by $\omega(G)$. Combining this observation
with elementary identities for induced subgraphs on three vertices
yields the following.

\begin{theorem}\label{thm:triangle lower bound}
Let $G$ be a graph on $n \geq 3$ vertices with edge density $\varepsilon$,
triangle density $\tau$, and clique-number density $\kappa$. If $G$ is
$C_4$-free, then
\begin{equation}\label{eq:lower}
    \tau
    \geq
    \max\left\{\,
        \varepsilon^{2}
        -\frac{(1-\varepsilon)(\kappa n+\varepsilon)}{n-2},
        \;\;
        1-\tfrac{3}{2}(1-\varepsilon)
          \left(1+\frac{\kappa n}{n-2}\right)
    \,\right\}.
\end{equation}
\end{theorem}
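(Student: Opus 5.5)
The plan is to count 3-vertex subsets by how many edges they span and to combine a few linear identities with the single $C_4$-free inequality. For $i=0,1,2,3$ let $t_i$ be the number of $3$-subsets of $V(G)$ inducing exactly $i$ edges, so $t_3=\tau\binom n3$, and write $e=\varepsilon\binom n2$. The key combinatorial input is the one described before the statement. Each induced $2$-path ($t_2$-type triple) has a unique nonadjacent pair of endpoints, and the middle vertex is a common neighbour of that pair. In a $C_4$-free graph the common neighbours of a nonadjacent pair form a clique. Hence
\[
t_2\le \omega(G)\,(1-\varepsilon)\binom n2=\kappa n(1-\varepsilon)\binom n2 .
\]

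\emph{Second bound.} Count pairs (non-edge, third vertex). Each $3$-set with $i$ edges contains $3-i$ non-edges, so
\[
3t_0+2t_1+t_2=(1-\varepsilon)\binom n2 (n-2)=3(1-\varepsilon)\binom n3 .
\]
Then
\[
\binom n3-t_3=t_0+t_1+t_2\le \tfrac12(3t_0+2t_1+t_2)+\tfrac12 t_2\le \tfrac32(1-\varepsilon)\binom n3+\tfrac12\kappa n(1-\varepsilon)\binom n2 .
\]
Using $\binom n2=\tfrac{3}{n-2}\binom n3$ and dividing by $\binom n3$ gives exactly $\tau\ge 1-\tfrac32(1-\varepsilon)\bigl(1+\tfrac{\kappa n}{n-2}\bigr)$.

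\emph{First bound.} Count cherries (paths of length two, not necessarily induced) via degrees:
\[
\sum_v\binom{d(v)}2=t_2+3t_3 .
\]
By convexity of $x\mapsto\binom x2$ (Jensen), the left side is at least $n\binom{\bar d}{2}$ with $\bar d=\varepsilon(n-1)$. Therefore
\[
3t_3\ge \tfrac{n(n-1)}2\bigl[\varepsilon(\varepsilon(n-1)-1)-\kappa n(1-\varepsilon)\bigr].
\]
Dividing by $3\binom n3=\tfrac{n(n-1)(n-2)}2$ and simplifying gives $\tau\ge \varepsilon^2-\frac{(1-\varepsilon)(\kappa n+\varepsilon)}{n-2}$, since $\varepsilon^2(n-1)-\varepsilon=\varepsilon^2(n-2)-\varepsilon(1-\varepsilon)$.

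Taking the maximum of the two bounds proves the statement. There is no real obstacle here. The only points needing care are bookkeeping ones: each induced $2$-path must be charged to its nonadjacent endpoint pair exactly once, and Jensen must be applied to $\binom x2$ viewed as a real convex function, which is valid even though $\bar d$ need not be an integer.
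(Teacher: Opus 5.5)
Your proof is correct and follows essentially the same route as the paper. Both use the common-neighbour clique bound $|T_2|\le \omega(G)(1-\varepsilon)\binom n2$, paired once with the census identity $3t_0+2t_1+t_2=3(1-\varepsilon)\binom n3$ (discarding a nonnegative multiple of $t_0$) and once with Jensen's inequality on $\sum_v\binom{\deg v}{2}=t_2+3t_3$. The only difference is presentational: the paper routes both estimates through the density parameter $b=2t_0+t_1$ (Lemmas~\ref{lem:t2 lower trivial}, \ref{lem:t2 lower jensen}, \ref{lem:t2 upper C4free} and Corollary~\ref{cor:b lower}) to stay in contact with the Betti table, whereas you work directly with raw triple counts.
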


Here $\kappa n=\omega(G)$ is an integer, so \eqref{eq:lower} involves no rounding;
it is an exact inequality valid at every $n\geq 3$, and as $n\to\infty$ its
right-hand side increases to
\begin{equation}\label{eq:lower limit}
    \max\left\{\,
        \varepsilon^2-\kappa(1-\varepsilon),
        \;\;
        1-\tfrac{3}{2}(1+\kappa)(1-\varepsilon)
    \,\right\}.
\end{equation}
We have chosen to record the finite form \eqref{eq:lower} rather than
\eqref{eq:lower limit} with an additive $o(1)$, since the two are not equivalent
and only the former is established by the proof. The normalization factors
$\tfrac{n}{n-2}$ that occur along the way approach their limits from the wrong
side, so passing to the limit prematurely would leave the finite statement
unproved. The same convention is used for the upper bound.

\medskip

\noindent \emph{The upper bound.} The upper estimate comes from the clique complex $X(G)$. Its
Stanley--Reisner ideal is the edge ideal of the complement graph. 
By Hochster's formula, the absence of induced four-cycles is equivalent
to the vanishing of a single graded Betti number, $\beta_{2,4}$.
If $X(G)$
is $2$-Leray over $\Bbbk$, then all its induced subcomplexes have vanishing
reduced homology in dimensions at least two, and the Betti table of its
Stanley--Reisner ring has at most two linear strands. The clique
number also has an algebraic interpretation,
$\dim\Bbbk[X(G)]=\omega(G)$, so the clique-number density controls the
codimension of the Stanley--Reisner ring. We use a Boij--S\"oderberg
decomposition of the Betti table to convert the restrictions imposed
by $C_4$-freeness, the $2$-Leray condition, and the codimension into
an inequality between the first two entries of the first linear
strand. Translating this inequality back into graph-theoretic
quantities gives the following.

\begin{theorem}\label{thm:triangle upper bound}
Let $G$ be a graph on $n$ vertices with edge density $\varepsilon$,
triangle density $\tau$, and clique-number density $\kappa$. If $G$ is
$C_4$-free and its clique complex is $2$-Leray over $\Bbbk$, then
\begin{equation}\label{eq:upper}
    \tau
    \leq
    \frac{(3-\kappa^2)\,\kappa\,\varepsilon}
         {1+2\kappa-\kappa^2}.
\end{equation}
\end{theorem}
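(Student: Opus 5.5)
The plan is to translate $\tau$ and $\varepsilon$ into entries of the Betti table of the Stanley--Reisner ring $R=\Bbbk[X(G)]$. Then I would bound the relevant linear-strand entry from above using a Boij--S\"oderberg decomposition in which $\beta_0$ is normalized, and finally translate back. Write $f_1,f_2$ for the numbers of edges and triangles and set $c=n-\omega(G)=n(1-\kappa)$, which is the codimension of $R$.

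\textbf{Step 1 (Hochster).} By Hochster's formula, $\beta_{1,2}=\binom n2-f_1$ is the number of non-edges. Similarly $\beta_{2,3}=\sum_{|W|=3}\dim\tilde H_0(X_W)=2e_0+e_1$, where $e_k$ is the number of triples spanning exactly $k$ edges. From $\sum e_k=\binom n3$ and $e_1+2e_2+3e_3=(n-2)f_1$ one gets
\[
\beta_{2,3}=2\tbinom n3-(n-2)f_1+f_2 .
\]
This is \emph{increasing} in $f_2$, so an \emph{upper} bound on $\beta_{2,3}$ in terms of $\beta_{1,2}$, $n$ and $c$ gives the required upper bound on $\tau$.

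\textbf{Step 2 (shape of the table).} The $2$-Leray condition means $\operatorname{reg}R\le 2$. Hence only the rows $\beta_{i,i+1}$ and $\beta_{i,i+2}$ occur, together with $\beta_{0,0}=1$. Since $I$ is generated by quadrics, $\beta_{1,3}=0$, and $C_4$-freeness gives $\beta_{2,4}=0$. Every pure diagram appearing in a Boij--S\"oderberg decomposition $\beta(R)=\sum a_D\,\pi(D)$ therefore has a degree sequence $(0,2,3,\dots)$ with steps $1$ or $2$. Its length $p$ satisfies $c\le p\le n$. I normalize the diagrams so that $\pi(D)_0=1$; then $a_D\ge0$ and $\sum a_D=1$.

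\textbf{Step 3 (extremal problem).} By the Herzog--K\"uhl formula, $b_1(D)=\pi(D)_{1,2}=\prod_{j\ge2}d_j/(d_j-2)$ and $b_2(D)=\pi(D)_{2,3}$. For the linear sequence $d=(0,2,3,\dots,p+1)$ these are $\binom{p+1}{2}$ and $2\binom{p+1}{3}$. The ratio $b_2/b_1=\tfrac23\prod_{j\ge3}(d_j-2)/(d_j-3)$ is largest when the sequence is linear, because each factor is decreasing in $d_j$.

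The goal is then to maximize $\sum a_D b_2(D)$ subject to $\sum a_D=1$, $\sum a_D b_1(D)=\beta_{1,2}$, and $p\in[c,n]$. The points $(b_1,b_2)$ lie on or below the curve traced by the linear diagrams, which is convex (roughly $b_2\sim b_1^{3/2}$). So the maximum on the line $\sum a_Db_1=\beta_{1,2}$ is attained by the chord joining the endpoint diagrams $p=c$ and $p=n$. This yields an explicit linear bound
\[
\beta_{2,3}\le \lambda\,\beta_{1,2}+\mu,
\]
where $\lambda,\mu$ are determined by the linear diagrams of lengths $c$ and $n$.

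\textbf{Step 4 (translate back).} Substitute Step 1, divide by $\binom n3$, and put $c=n(1-\kappa)$. Then simplify to \eqref{eq:upper}, keeping $n$ exact as the paper's convention demands. If the finite-$n$ expression is not directly bounded by the right-hand side, I will check monotonicity in $n$ separately.

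\textbf{Main obstacle.} I expect Step 3 to be the hardest part. It requires justifying that non-linear diagrams (those with a jump of $2$ into the second row) are dominated by the chord between the linear diagrams, and that the convexity argument is legitimate over the whole admissible range of $p$. The first thing I would try is to show directly that, for fixed $b_1$, inserting a jump lowers $b_2$ compared with the linear diagram of comparable $b_1$. This would reduce the problem to the one-parameter family of linear diagrams.
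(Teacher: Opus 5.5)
Your Steps 1 and 2 match the paper: $\beta_{2,3}=2\binom n3-(n-2)f_1+f_2$ is the identity $\tau=b-3m+1$, and every summand is a two-strand type $\mathbf d(a,L)=(0,2,3,\dots,a-1,a+1,\dots,L+2)$ with $4\le a\le L+2$ and $L\ge c$. The gap is in Step 3, and it is not a technicality: \textbf{the linear diagrams are the wrong extremal family}. Your chord argument presupposes that every admissible summand has $\hat\beta_{1,2}\ge\binom{c+1}{2}$, the value for the linear diagram of length $c$. But $\hat\beta_{1,2}(\mathbf d(a,L))=\frac{a-2}{a}\binom{L+2}{2}$. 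At $L=c$ this is strictly below $\binom{c+1}{2}$ for every $a<c+2$, and for $a=4$ it is about half of it. This already happens in the running example. For $G_3=C^3_{13}$ one has $c=9$ and $\beta_{1,2}=39<45=\binom{10}{2}$, and the decomposition in Example~\ref{ex:BS G3} contains $\pi_{4,9}$ with $\hat\beta_{1,2}=\tfrac12\binom{11}{2}$. Your proposed remedy is that a jump lowers $\hat\beta_{2,3}$ at fixed $\hat\beta_{1,2}$. That is true: asymptotically, for fixed $\ell$ the points $\bigl(\tfrac{a-2}{a}\ell^2,\tfrac{2(a-3)}{a}\ell^3\bigr)$ fill a segment tangent from below to $b=2m^{3/2}$ at the linear point. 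It does not help, however, because the linear diagram with the same $\hat\beta_{1,2}$ has length less than $c$ and is not admissible. So reducing to the one-parameter linear family is illegitimate, and the convex hull extends to the left of your chord's endpoint.

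Once these points are included, the upper boundary of the hull is the line through $(1,2)$ and the point of $\pi_{4,c}$. Asymptotically that point is $(\tfrac12\ell^2,\tfrac12\ell^3)$ with $\ell=1-\kappa$. The line has slope $\frac{4-\ell^3}{2-\ell^2}$, which is smaller than your slope $\frac{2(1+\ell+\ell^2)}{1+\ell}$ by $\frac{\ell^2(1-\ell)(2+\ell)}{(1+\ell)(2-\ell^2)}$. Translating your line as in Step 4 gives $\tau\le\varepsilon\kappa\frac{3-2\kappa}{2-\kappa}$, which is not \eqref{eq:upper} and is in fact false. The circulants $C^k_{4k+1}$ are $C_4$-free and $2$-Leray, with $\varepsilon=\tfrac12$, $\kappa=\frac{k+1}{4k+1}\to\tfrac14$ and $\tau=\frac{3(k-1)}{4(4k-1)}\to\tfrac{3}{16}$, whereas $\varepsilon\kappa\frac{3-2\kappa}{2-\kappa}\to\tfrac{5}{28}<\tfrac{3}{16}$.

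The missing idea is that the extremal short diagram is $\pi_{4,c}$, whose second strand opens at internal degree $5$, as early as $C_4$-freeness allows. Lemma~\ref{lem:halfplane} shows that every admissible $\pi_{a,L'}$ lies below the line through $(1,2)$ of slope $\frac{4-\ell^3}{2-\ell^2}$. First, convexity of the chain in $L'$ reduces to $L'\in\{c,n-2\}$. Next, affineness in $1/a$ reduces to $a=4$ and $a\to\infty$. Finally, explicit polynomials with nonnegative coefficients are checked at exact $n$. Exactness matters because the line is tight at both ends, so limiting values do not fix the sign of the inequality; a separate ``monotonicity in $n$'' check afterwards would not suffice. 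With this slope, $\tau\le\varepsilon\bigl(3-\frac{4-\ell^3}{2-\ell^2}\bigr)$, which is exactly \eqref{eq:upper}.
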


The two estimates are reached by independent routes:
\[
\begin{tikzcd}[column sep=large]
    \text{$C_4$-freeness and clique number}
    \arrow[r, "\text{counting}"]
    &
    \text{lower bound on $\tau$}
    \\
    \text{Betti table of $X(G)$}
    \arrow[r, "\text{Boij--S\"oderberg}"]
    &
    \text{upper bound on $\tau$.}
\end{tikzcd}
\]
Applied to the same graph they sandwich the triangle density, and their
incompatibility when $\kappa$ is too small forces a lower bound on $\kappa$ ---
the clique-density bound worked out in the next subsection. Whether the homological condition in
Theorem~\ref{thm:triangle upper bound} is a genuine restriction or an artifact of
the proof is discussed below.

\subsection*{Consequences}

Combining the two triangle-density estimates gives an implicit lower
bound on $\kappa$ as a function of $\varepsilon$ for $C_4$-free graphs
whose clique complexes are $2$-Leray over $\Bbbk$. Unlike the two
triangle-density bounds themselves, the resulting clique-density bound is
inherently asymptotic, since the normalization factors of \eqref{eq:lower}
survive the elimination of $\tau$. We therefore state it for the limiting form
\eqref{eq:lower limit}, so that the conclusions below hold up to an additive
$o(1)$ in $\kappa$. The two branches of \eqref{eq:lower limit} yield
distinct relations. For lower clique densities the dominating relation is
\begin{equation}\label{eq:sandwich low}
    \varepsilon^2 - \kappa(1-\varepsilon)
    \leq
    \frac{(3-\kappa^2)\,\kappa\,\varepsilon}{1+2\kappa - \kappa^2},
\end{equation}
while for higher clique densities we get
\begin{equation}\label{eq:sandwich high}
    1-\tfrac{3}{2}(1+\kappa)(1-\varepsilon)
    \leq
    \frac{(3-\kappa^2)\,\kappa\,\varepsilon}{1+2\kappa - \kappa^2}.
\end{equation}
Solving each relation for $\kappa$ gives a piecewise lower bound on the
clique-number density. The resulting curve lies strictly between the
general $C_4$-free bound \eqref{eq:holmsen} and the sharp chordal bound
\eqref{eq:chordal} for every $\varepsilon \in (0,1)$. 
In this sense the new estimate interpolates between
the known bounds for chordal graphs and for general $C_4$-free graphs.
Figure~\ref{fig:bounds} shows the resulting curve together with these
two bounds; the details are worked out in Section~\ref{sec:discussion}.

\begin{figure}[htbp]
    \centering
\begin{tikzpicture}[scale = 0.85]
\begin{axis}[
    width=0.8\linewidth, height=0.8\linewidth,
    xmin=0, xmax=1.05, ymin=0, ymax=1.05,
    axis lines=middle,
    xlabel={$\varepsilon$},
    ylabel={$\kappa$},
    xlabel style={at={(axis description cs:1.04, 0.0)},anchor=north east},
    ylabel style={at={(axis description cs:-0.02,0.99)},anchor=south,rotate=0},
    xtick={0,0.2,0.4,0.6,0.8,1.0},
    ytick={0,0.2,0.4,0.6,0.8,1.0},
    minor tick num=1,
    grid=both,
    major grid style={line width=.3pt,draw=gray!40},
    minor grid style={line width=.15pt,draw=gray!20},
    tick label style={font=\small},
    label style={font=\small},
    samples=200,
    clip=true,
]
\addplot[cChordal, thick, domain=0:1] {1 - sqrt(1-x)};
\addplot[cHolmsen, thick, domain=0:1] {(1 - sqrt(1-x))^2};
\addplot[cLow, very thick, samples=200, domain=0.0001:0.221992, variable=\k]
    ( { (sqrt(\k)*sqrt(\k^4 - 3*\k^3 + 5*\k + 1) - \k*(\k-1)) / (1 + 2*\k - \k*\k) } , \k );
\addplot[cLow, thin, dash pattern=on 0.5pt off 1.2pt, samples=160, domain=0.221992:0.62, variable=\k]
    ( { (sqrt(\k)*sqrt(\k^4 - 3*\k^3 + 5*\k + 1) - \k*(\k-1)) / (1 + 2*\k - \k*\k) } , \k );
\addplot[cHigh, very thick, samples=200, domain=0.221992:0.9999, variable=\k]
    ( { (-3*\k^3 + 5*\k^2 + 5*\k + 1) / (-\k^3 + 3*\k^2 + 3*\k + 3) } , \k );
\addplot[cHigh, thin, dash pattern=on 0.5pt off 1.2pt, samples=160, domain=0.0:0.221992, variable=\k]
    ( { (-3*\k^3 + 5*\k^2 + 5*\k + 1) / (-\k^3 + 3*\k^2 + 3*\k + 3) } , \k );
\end{axis}
\end{tikzpicture}
    \caption{Lower bounds on the clique-number density $\kappa$ in terms of edge density $\varepsilon$. The new bound for induced-$C_4$-free graphs with $2$-Leray clique complex improves the previous general induced-$C_4$-free bound (in red) and lies below the sharp chordal bound (in blue). The two pieces arise from two different lower estimates for the triangle density.}
    \label{fig:bounds}
\end{figure}

For small edge density, the active branch of our estimate has the
expansion
\[
    \kappa
    =
    \varepsilon^2
    -2\varepsilon^3
    +4\varepsilon^4
    +O(\varepsilon^5).
\]
In particular $\kappa\sim\varepsilon^2$ as $\varepsilon\to0$, with
leading coefficient $1$. The quadratic order agrees with the known
extremal behavior for general $C_4$-free graphs, while the coefficient
agrees with the refined small-density estimate of Gy\'arf\'as,
Hubenko, and Solymosi \cite[Theorem 2]{GyarfasHubenkoSolymosi2002}. 
At the other end of the interval, our bound is
asymptotic to $\kappa = 1-\sqrt{2}\sqrt{1-\varepsilon}$ as
$\varepsilon\to1$.

\begin{example}\label{ex:circulant graph}
The representative case $\varepsilon = \tfrac12$ is realized by a family of
circulants. Let $G_k = C^k_{4k+1}$ denote the $k$th power of the cycle on $4k+1$ vertices,
that is, the circulant graph in which two vertices are adjacent whenever their
indices differ by at most $k$ modulo $4k+1$. Each $G_k$ is $C_4$-free and
$2k$-regular, so its edge density satisfies $\varepsilon \to \tfrac12$ as
$k \to \infty$. Its clique number is $\omega(G_k) = k+1$, giving clique-number
density $\kappa \to \tfrac14$. These graphs, introduced in
\cite{GyarfasHubenkoSolymosi2002}, are the best construction known for the
clique-number problem at edge density $\tfrac12$: they show that a $C_4$-free
graph of edge density $\tfrac12$ need not have clique-number density exceeding
$\tfrac14$. Gy{\'a}rf{\'a}s and S{\'a}rk{\"o}zy \cite{GyarfasSarkozy2017} proved
that these graphs are extremal within the class of \emph{regular} graphs, confirming a
conjecture of \cite{GyarfasHubenkoSolymosi2002}: every $2k$-regular $C_4$-free
graph on $4k+1$ vertices contains a clique of size $k+1$. Whether $\kappa =
\tfrac14$ is optimal at $\varepsilon = \tfrac12$ over all $C_4$-free graphs
remains open, and we return to this point in
Section~\ref{sec:discussion}. The case $k=3$ is drawn in
Figure~\ref{fig:G3}.

\begin{figure}[htbp]
  \centering
\begin{tikzpicture}[
    vtx/.style={circle,fill=black,inner sep=1.3pt}
  ]
  \def\n{13}
  \def\R{2}
  \foreach \i in {0,...,12}{
    \pgfmathsetmacro{\ang}{90 - 360*\i/\n}
    \coordinate (v\i) at (\ang:\R);
  }
  \foreach \d in {1,2,3}{
    \foreach \i in {0,...,12}{
      \pgfmathtruncatemacro{\j}{mod(\i+\d,\n)}
      \draw[black,line width=0.5pt] (v\i) -- (v\j);
    }
  }
  \foreach \i in {0,...,12}{ \node[vtx] at (v\i) {}; }
\end{tikzpicture}
\caption{The circulant graph $G_3 = C_{13}^{\,3}$.}
  \label{fig:G3}
\end{figure}
\end{example}

At this edge density our estimate gives $\kappa\geq 0.1501\ldots$ under the
$2$-Leray hypothesis, improving the previous general lower bound
\eqref{eq:holmsen}, which reads $(1-1/\sqrt{2})^2 = 0.0857\ldots$ here. A substantial gap remains between
our lower bound and the value $1/4$ attained by the circulant
construction, so determining the true extremal value at
$\varepsilon=1/2$ is already an interesting special case of the general
problem.

\subsection*{The Leray hypothesis and open questions}

The $2$-Leray condition is essential to our proof of the upper
triangle-density bound: it ensures that the relevant Betti table has at
most two linear strands. It is not clear, however, whether this
condition reflects a genuine extremal distinction or merely a
limitation of the method. We are not aware of any $C_4$-free graph
violating \eqref{eq:upper}, even when its clique complex has
nonvanishing homology in dimension two or higher. This leads to the
principal structural question raised by the paper:
\begin{question} \label{qu:tau bound}
    Does the upper triangle-density bound \eqref{eq:upper} hold for
    every $C_4$-free graph, without assuming that its clique complex is
    $2$-Leray?
\end{question}
An affirmative answer would make the resulting clique-density bound
valid for all $C_4$-free graphs. More generally, the method reduces the
clique-density problem to a question about which Betti tables can occur
for clique complexes of $C_4$-free graphs. Thus the extremal problem is
connected not only to subgraph counting, but also to the geometry of
the cone of Betti tables and to the homology of flag complexes.

\subsection*{Organization of the paper}

Section~\ref{sec:prelim} introduces the densities and counting
identities used throughout, and recalls the connection between the
clique complex and its Stanley--Reisner ring.
Section~\ref{sec:resolutions} reviews the part of Boij--S\"oderberg
theory needed for the argument. In Section~\ref{sec:upper} we prove the
upper bound on the triangle density under the $2$-Leray hypothesis.
Section~\ref{sec:lower} gives the elementary lower bound for all
$C_4$-free graphs, by combinatorial arguments similar to those in
\cite{GyarfasHubenkoSolymosi2002, Holmsen2020}. Finally, Section~\ref{sec:discussion} combines the
estimates, compares the resulting clique-density bound with known
bounds, discusses the case $\varepsilon=1/2$, recovers the sharp
chordal estimate, and formulates the remaining open problems.

\section{Preliminaries, notation, and terminology}\label{sec:prelim}

Throughout we fix a field $\Bbbk$; all homology groups and Betti numbers are taken
over $\Bbbk$. We continue to write $\varepsilon$, $\kappa$,
and $\tau$ for the edge, clique-number, and triangle densities of a
graph $G$ on $n$ vertices. We also write $m = 1-\varepsilon$ for the missing-edge density.
For a subset $S\subseteq V$ we write $G[S]$ to denote the subgraph of $G$ induced on $S$. The same notation is also applied to the induced subcomplexes of a simplicial complex.

\paragraph{Induced subgraphs of order three}

Let $G = (V,E)$ be a graph on $n$ vertices. We partition the three-element
subsets of $V$ according to how many edges they span,
\[
  \binom{V}{3} = T_0 \cup T_1 \cup T_2 \cup T_3,
  \qquad
  T_i = \left\{ S \in \tbinom{V}{3} \;:\; G[S] \text{ contains exactly } i \text{ edges} \right\}.
\]
Thus $T_3$ is the set of triangles of $G$, and $T_2$ is the set of induced
$2$-paths (paths on three vertices). The following counting identities are a
basic tool of this paper.

\begin{lemma}\label{lem:census}
For any graph $G=(V,E)$ on $n$ vertices,
\begin{enumerate}\setlength{\itemsep}{.4em}
  \item $|T_0| + |T_1| + |T_2| + |T_3| = \dbinom{n}{3}$,
  \item $|T_1| + 2|T_2| + 3|T_3| = (n-2)\,|E|$,
  \item $|T_2| + 3|T_3| = \displaystyle\sum_{v\in V} \binom{\deg v}{2}$.
\end{enumerate}
\end{lemma}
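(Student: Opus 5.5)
The three identities in Lemma~\ref{lem:census} come from three double-counting arguments. Each one counts incidences between the $3$-subsets $S\in\binom{V}{3}$ and some auxiliary structure, and then groups the count by the class $T_i$ containing $S$.

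Identity (1) is immediate: the four sets $T_0,\dots,T_3$ are defined by the number of edges spanned by $S$, which is one of $0,1,2,3$. So they partition $\binom{V}{3}$, and summing their cardinalities gives $\binom{n}{3}$.

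For (2), we count pairs $(e,S)$ with $e\in E$, $S\in\binom{V}{3}$ and $e\subseteq S$.
\begin{itemize}
\item Counting by edges: each edge $e=\{u,w\}$ lies in exactly $n-2$ triples $S=\{u,w,x\}$, one for each $x\notin e$. This gives $(n-2)|E|$.
\item Counting by triples: a triple $S\in T_i$ contains exactly $i$ edges of $G$. This gives $\sum_i i\,|T_i| = |T_1|+2|T_2|+3|T_3|$.
\end{itemize}

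For (3), we count pairs $(v,\{e,e'\})$ where $v\in V$ and $e\neq e'$ are two edges both incident to $v$ (``cherries'' centered at $v$).
\begin{itemize}
\item Counting by centers: vertex $v$ is the center of $\binom{\deg v}{2}$ such pairs, so the total is $\sum_v\binom{\deg v}{2}$.
\item Counting by triples: each cherry spans a unique triple $S=\{v\}\cup e\cup e'$, with both edges inside $G[S]$. Conversely, a triple $S$ supports as many cherries as there are pairs of edges in $G[S]$, since any two distinct edges on three vertices share exactly one vertex. That number is $\binom{i}{2}$ for $S\in T_i$, namely $0,0,1,3$.
\end{itemize}
Summing over the classes gives $|T_2|+3|T_3|$.

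No step is a genuine obstacle. The only point needing care is in (3): one must check that the map from cherries to triples has fibres of size exactly $\binom{i}{2}$. This holds because two distinct edges inside a $3$-set always share a vertex, so each pair of edges in $G[S]$ determines exactly one cherry, centered at their common vertex.
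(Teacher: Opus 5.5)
Your proof is correct and follows the paper's argument: (1) and (2) match the paper exactly. For (3) the paper splits the $\binom{\deg v}{2}$ pairs of neighbours of each $v$ into edges of $G[N_v]$ (triangles through $v$) and non-edges (induced $2$-paths centred at $v$), which is the same double count as your cherries grouped by triple, with multiplicities $3$ and $1$.
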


\begin{proof}
Identity (1) holds by definition. Identity (2) is a double count of the incident
pairs $(e,S)$ with $e$ an edge, $S$ a triple containing $e$: summing over triples,
a triple in $T_i$ contributes $i$; summing over edges, each edge lies in exactly
$n-2$ triples. For identity (3), consider the (open) neighborhood $N_v$ of a vertex $v$.
Each edge of $G[N_v]$ gives a triangle of $G$ through $v$, and each non-edge of
$G[N_v]$ gives an induced $2$-path centered at $v$. Summing over $v$ counts each
triangle three times (once per vertex) and each induced $2$-path once (at its
center).
\end{proof}

\paragraph{The clique complex and its Stanley--Reisner ring}

Let $X = X(G)$ denote the \emph{clique complex} of $G$, the simplicial complex whose
faces are the cliques of $G$. Its Stanley--Reisner ideal is generated by the
minimal non-faces of $X$, which are exactly the non-edges of $G$; hence this
ideal is the edge ideal $I(\overline{G})$ of the complement graph $\overline{G}$, and the
Stanley--Reisner ring of $X$ is $\Bbbk[X] = S/I(\overline{G})$, where
$S = \Bbbk[x_v : v \in V]$.

We record the graded Betti numbers of this ring that we will use.
Recall Hochster's formula \cite[Corollary 5.12]{MillerSturmfels}, 
which expresses the graded Betti numbers of a
Stanley--Reisner ring in terms of the reduced homology of the induced
subcomplexes of $X$: for a simplicial complex $X$ on vertex set $V$,
\begin{equation}\label{eq:hochster}
  \beta_{i,j}\bigl(\Bbbk[X]\bigr)
  \;=\;
  \sum_{\substack{W \subseteq V \\ |W| = j}}
  \dim_{\Bbbk} \tilde H_{\,j-i-1}\bigl(X[W]; \Bbbk\bigr),
\end{equation}
where $X[W]$ denotes the subcomplex of $X$ induced on $W$. 

\begin{remark} \label{rem:resolutions} We briefly recall the origin and significance of graded Betti numbers.
For the polynomial ring $S$ let $S(-j) = Su$ be the free $S$-module with generator
$u$ in degree $j$. For a graded ideal $I$ in $S$, the quotient ring $S/I$ (like
$\Bbbk[X]$) has 
a {\it minimal free resolution} \cite[Sec. 1.4]{MillerSturmfels}, unique up to isomorphism:
\[ S \leftarrow F_1 \leftarrow F_2 \leftarrow \cdots \leftarrow F_r \leftarrow 0.\]
where $F_i = \oplus_{j \in {\mathbb Z}} S(-j)^{\beta_{i,j}(S/I)}$ is a free graded
$S$-module (by $S(-j)^m$ we mean a direct sum of $m$ copies).

For later, the resolution is called {\it pure}, if for each $0 \leq i \leq r$ there
is exactly one $j$ with nonzero $\beta_{i,j}$. We then denote this $j$ as $d_i$, and thus get associated a {\it degree sequence}
$(d_0, d_1, \ldots, d_r)$ (with $d_0 = 0$).
\end{remark}

\begin{example}\label{ex:betti table G3}
As an illustration we consider the circulant graph $G_3 = C_{13}^{\,3}$ of
Example~\ref{ex:circulant graph}. The array below is the Betti table of
$\Bbbk[X(G_3)] = S/I(\overline{G_3})$, whose entries are given by \eqref{eq:hochster}; we computed it using Macaulay2 \cite{M2}. The rows are 
indexed by $d = j-i$ and the columns by the homological degree $i$. 
\[
\begin{array}{r|*{12}{c}}
   d & 0 & 1 & 2 & 3 & 4 & 5 & 6 & 7 & 8 & 9 & 10 & 11 \\ \hline
   0 & 1 & \z & \z & \z  & \z  & \z  & \z  & \z  & \z  & \z & \z & \z \\
   1 & \z & 39 & 182 & 377 & 390 & 195 & 39 & \z & \z & \z & \z & \z \\
   2 & \z & \z & \z & 39 & 273 & 663 & 819 & 598 & 273 & 78 & 13 & 1 \\
\end{array}
\]
The table has nonzero entries only in the rows $d = j-i \in \{0,1,2\}$; that is,
it has \emph{two} linear strands. By Hochster's formula \eqref{eq:hochster} this means
$\tilde H_p(X(G_3)[W];\Bbbk) = 0$ for every induced subcomplex $X(G_3)[W]$ and all
$p \geq 2$, so $X(G_3)$ is $2$-Leray.

The same holds for every circulant graph $G_k$ of
Example~\ref{ex:circulant graph}. In fact, the clique complex $X(G_k)$ can be realized as the nerve complex of a family of evenly distributed arcs on the unit circle, each of length $\frac{2\pi k}{4k+1}$. It is then a simple consequence of the nerve theorem that $X(G_k)$ is $2$-Leray. 
\end{example}

Writing
$\beta_{i,j}$ for the entries of the Betti diagram of
$\Bbbk[X(G)] = S/I(\overline{G})$, the two \emph{density parameters} we read off it
are
\[
  m = \frac{\beta_{1,2}}{\binom{n}{2}} = 1 - \varepsilon,
  \qquad
  b = \frac{\beta_{2,3}}{\binom{n}{3}} =  \frac{2|T_0| + |T_1|}{\binom{n}{3}},
\]
obtained by scaling each entry by the binomial coefficient of its internal degree.
The first equals $m$ because $\beta_{1,2}$ counts the generators of the edge
ideal (the non-edges of $G$). Both $\beta_{1,2}$ and $\beta_{2,3}$ lie in the first
linear strand $d = j-i = 1$ of the diagram, $\beta_{1,2}$ being its first entry and
$\beta_{2,3}$ its second, as one sees in the table of $G_3$ in
Example~\ref{ex:betti table G3}, where $\beta_{1,2} = 39$ and $\beta_{2,3} = 182$.
(These density parameters are not to be confused with the Boij--S\"oderberg
normalization to be used in Section~\ref{sec:resolutions}, which instead scales a whole
diagram by a single constant so that its top-left entry equals $1$.)

Normalizing the triple counts by $|T_i| = t_i \binom{n}{3}$, Lemma~\ref{lem:census}
becomes
\begin{equation}\label{eq:normalized census}
  t_0 + t_1 + t_2 + t_3 = 1,
  \qquad
  t_1 + 2t_2 + 3t_3 = 3(1-m),
  \qquad
  t_2 + 3t_3 = D,
\end{equation}
where $D = \binom{n}{3}^{-1}\sum_{v} \binom{\deg v}{2}$. 
Solving the first two equations of \eqref{eq:normalized census} for $t_2$ and
$t_3$, and using the third together with $b = 2t_0 + t_1$, gives the identities
\begin{equation}\label{eq:t2 and t3}
  t_2 = D - 3t_3 = 3m - 3t_0 - 2t_1,
  \qquad
  \tau = t_3 = b - 3m + 1,
\end{equation}
which we use to pass between the triangle density $\tau$, the triple count, and the
density parameter $b$.

\section{Boij--S\"oderberg decompositions}\label{sec:resolutions}

Boij--S\"oderberg theory asserts that the Betti diagram of a graded module over
$S$ is a positive rational combination of \emph{pure} diagrams, indexed by a chain
of strictly increasing integer sequences \cite{eng, flo}. In this section we
recall the pure diagrams we need and their normalized Betti numbers; the material
is standard and we include it to fix notation.

\paragraph{Pure diagrams and their types}

Let $r$ be a positive integer and let $\mathbf{d} = (d_0, d_1, \dots, d_r)$ be a
strictly increasing sequence of integers, which we call a \emph{type} of \emph{length} $r$. 
(This is just another name for the degree sequence introduced in Remark \ref{rem:resolutions}.)
Types are partially ordered by $\mathbf{d} \preceq \mathbf{e}$ if $\mathbf{d}$
is at least as long as $\mathbf{e}$, and $d_i \leq e_i$ for each $i$ up to the length 
of $\mathbf{e}$.

A type $\mathbf{d}$ is the sequence
of generating degrees of a {\it pure} resolution whose $p$th free module is generated
in degree $d_p$. The corresponding {\it pure} Betti diagram of $\mathbf{d}$ 
is determined up to scalar multiple, and has a single nonzero entry in
each column $p$, namely in internal degree $d_p$; we index it
$\hat\beta_{p,d_p}$, placing it in column $p$ and row $d_p - p$. The
\emph{normalized pure diagram of type $\mathbf{d}$}, written $\pi(\mathbf{d})$,
is the diagram with entries, \cite[Sec.1.4]{flo}:
\begin{equation}\label{eq:normbetti}
  \hat\beta_{p,d_p}
  = \frac{\prod_{0 < k \leq r}(d_k - d_0)}
         {\prod_{0 \leq k < p}(d_p - d_k)\cdot \prod_{p < k \leq r}(d_k - d_p)}.
\end{equation}
It is normalized so that $\hat\beta_{0,d_0} = 1$. Observe that the entries are invariant under
translation of $\mathbf{d}$; we may therefore assume $d_0 = 0$ throughout. We
write $\hat\beta_{i,j}(\mathbf{d})$ when the type is not clear from context.
The shape of the diagram --- which entry sits in
which row --- is read off from $\mathbf{d}$ as well, since the entry in column
$p$ lies in row $d_p - p$.

The significance of pure Betti diagrams is that any Cohen-Macaulay module $M$
of codimension $r$ with pure resolution of type $\mathbf{d}$ has Betti diagram 
$\lambda \pi({\mathbf{d}})$ for some scalar $\lambda$.

\begin{remark} This fact has a standard simple argument. 
The Herzog--K\"uhl equations, \cite[Sec.1.3]{flo}, form a linear system of
rank $r$ in the $r+1$ unknowns $\hat\beta_{p,d_p}$, so the solution space is
one-dimensional, and \eqref{eq:normbetti} selects the solution with
$\hat\beta_{0,d_0} = 1$.
\end{remark}

\begin{remark}\label{rem:normalization}
Normalizations differ in the literature. Our $\pi(\mathbf{d})$ agrees with the
pure table of \cite{eng}, scaled so that the top-left entry is $1$, whereas
\cite{flo} takes $\pi(\mathbf{d})$ to be the smallest \emph{integer} solution of
the Herzog--K\"uhl equations. The two differ by a rational factor. They coincide
for $\mathbf{d} = (0,2,3)$, where both give $(1,3,2)$, but for
$\mathbf{d} = (0,2,3,4,5,7,8,\dots,14)$ ours begins $(1, \tfrac{182}{3}, 364, \dots )$
while that of \cite{flo} begins $(3, 182, 1092, \dots )$. Since we work throughout with
convex combinations of diagrams whose top-left entry is $1$, the present
normalization is the convenient one.
\end{remark}

\subsection*{Betti diagrams of edge ideals}

We apply Boij--S\"oderberg theory to the Stanley--Reisner ring
$\Bbbk[X(G)] = S/I(\overline{G})$ of the clique complex, and write $B(G)$ for its
normalized Betti diagram. The theory says that $B(G)$ decomposes into an
\emph{convex combination} of normalized pure diagrams, \cite{BS2012} or \cite[Sec.5]{flo}:
\begin{equation}\label{eq:BS}
  B(G) = \sum_{i} \alpha_i\, \pi\bigl(\mathbf{d}^{(i)}\bigr),
  \qquad \alpha_i > 0, \quad \sum_i \alpha_i = 1,
\end{equation}
whose types form a chain
$\mathbf{d}^{(1)} \prec \mathbf{d}^{(2)} \prec \cdots$. Here \emph{normalized} is
meant in the sense of this section: a diagram is scaled by a single rational
constant so that its top-left entry $\beta_{0,0}$ equals $1$. For an edge ideal
this scaling is trivial, as $\beta_{0,0} = 1$ already, so $B(G)$ is the ordinary
integral Betti diagram; the pure summands $\pi(\mathbf{d}^{(i)})$, however, may
have rational entries.

The \emph{length} of a pure diagram of type $\mathbf{d} = (d_0,\dots,d_r)$ is
$r$, and the lengths in \eqref{eq:BS} are bounded below in terms of the clique
number. Indeed, the Krull dimension of a Stanley--Reisner ring is one more than
the dimension of the complex, so
\[
  \dim \Bbbk[X(G)] = \dim X(G) + 1 = \bigl(\omega(G) - 1\bigr) + 1 = \omega(G),
\]
since the top-dimensional faces of $X(G)$ are the maximum cliques of $G$. Every
pure diagram in a Boij--S\"oderberg decomposition of a quotient ring $\Bbbk[X(G)]$ 
has length at least the
codimension of the quotient ring, and the shortest one attains it, so, writing $r_i$ for the length of
$\mathbf{d}^{(i)}$,
\begin{equation}\label{eq:min length}
  \min_i r_i = n - \dim\Bbbk[X(G)] = n - \omega(G) = (1-\kappa)\,n.
\end{equation}
In particular $r_i \geq (1-\kappa)n$ for every $i$.

The edge-ideal structure further restricts the types appearing in
\eqref{eq:BS}. Writing $\mathbf{d} = (0, d_1, \dots, d_r)$:
\begin{itemize}
  \item $d_1 \geq 2$ (the first row of $B(G)$ is $(1,0,\dots,0)$);
  \item $d_k \leq 2k$ for $1 \leq k \leq r$, so $B(G)$ vanishes below the main
        diagonal. This is true for the edge ideal of \emph{any} graph; taking $k = 1$ this forces $d_1 = 2$;
  \item if $\overline{G}$ has no $2K_2$, equivalently $G$ is $C_4$-free, then
        $d_2 = 3$ (assuming $r \geq 2$);
  \item more generally, call an induced cycle of length greater than three a
        \emph{hole}. If $G$ has no hole of length less than $h$, then every type
        in \eqref{eq:BS} satisfies $d_p - p \leq 1$ for every $p$ with
        $d_p < h$. (The previous item is the case $h = 5$.)
\end{itemize}
The first three items are immediate. The last is a theorem of Eisenbud, Green,
Hulek, and Popescu \cite[Theorem 2.1]{EGHP2005}: the Stanley--Reisner ideal of
the clique complex of $G$ has a linear resolution for $p$ steps exactly when
every hole of $G$ has length at least $p+3$, so a nonlinear entry first appears
in internal degree equal to the length of the shortest hole. Their result is
independent of $\Bbbk$ \cite[Corollary 2.7]{EGHP2005}, and specializes to
Fr\"oberg's characterization of the chordal case \cite[Corollary 2.2]{EGHP2005}.
In particular the bound is sharp: if $W$ is the vertex set of a shortest hole
then $X(G)[W]$ is a circle, so row $2$ of $B(G)$ begins in internal degree
exactly the length of that hole.

\paragraph{The two-strand types $\pi_{a,L}$}

We now narrow to the pure diagrams that enter the upper bound in
Section~\ref{sec:upper}: those with at most two linear strands, that is, with
nonzero entries only in rows $d = j - i \in \{0,1,2\}$. Since the entry in column
$p$ lies in row $d_p - p$, this says exactly that
\begin{equation}\label{eq:two strands}
  d_p - p \leq 2 \qquad \text{for all } p.
\end{equation}
Together with the restrictions above, this determines the type completely, as we now record.

Consider then a type $\mathbf{d} = (0, d_1, \dots, d_L)$ with $L \geq 2$,
$d_1 = 2$ and $d_2 = 3$, satisfying \eqref{eq:two strands}. The entry in column
$p$ lies in row $d_p - p$, and $d_p - d_{p-1} \geq 1$, so this row is
non-decreasing in $p$; it already equals $1$ at $p = 1$ (as $d_1 = 2$) and never
exceeds $2$. Hence the row jumps from $1$ to $2$ at most once, at some index
$p_0 \geq 3$ (necessarily $p_0 \geq 3$ since $d_2 = d_1 + 1$), and the jump is by
exactly one row. So $d_p = p+1$ before the jump and $d_p = p+2$ from $p_0$ on,
which is to say
\begin{equation}\label{eq:dtype}
  \mathbf{d} \;=\; \mathbf{d}(a,L)
  \;:=\; (0,\,2,\,3,\,\dots,\,a-1,\,a+1,\,\dots,\,L+2),
\end{equation}
the sequence obtained from $0,2,3,\dots,L+2$ by deleting a single value $a =
p_0+1$, with $4 \leq a \leq L+2$; the value $a$ is recovered as the unique element
of $\{2,3,\dots,L+2\}$ missing from $\mathbf{d}$, so the presentation is unique.
The extreme value $a = L+2$ gives $(0,2,3,\dots,L+1)$, the type of a diagram with
a single linear strand.

For $a$ and $L$ as above we write $\pi_{a,L} := \pi(\mathbf{d}(a,L))$.
Its
two first-strand entries, computed from \eqref{eq:normbetti}, are
\begin{equation}\label{eq:strand entries}
  \hat\beta_{1,2}\bigl(\mathbf{d}(a,L)\bigr) = \frac{a-2}{a}\binom{L+2}{2},
  \qquad
  \hat\beta_{2,3}\bigl(\mathbf{d}(a,L)\bigr) = \frac{2(a-3)}{a}\binom{L+2}{3}.
\end{equation}
Both formulas remain valid at the one-strand end $a = L+2$, where they reduce to
$\binom{L+1}{2}$ and $\tfrac13 L(L^2-1)$. The one-strand diagrams are thus not a
separate case but the boundary of the same family, a fact we use again in
Section~\ref{sec:discussion}.

From here on we rescale lengths by $n$: for a type of length $L$ we write
$\ell = L/n \in (0,1]$ for its \emph{length density}, matching the scale of
$1-\kappa$ in \eqref{eq:min length}. To $\pi_{a,L}$ we attach two \emph{density
parameters}, obtained from the entries \eqref{eq:strand entries} by scaling by
$\binom n2$ and $\binom n3$ respectively, in the same way that $m$ and $b$ were
attached to $B(G)$ in Section~\ref{sec:prelim}:
\begin{equation}\label{eq:pure entries exact}
  m_{a,\ell}^{(n)} = \frac{a-2}{a}\cdot\frac{\binom{L+2}{2}}{\binom{n}{2}},
  \qquad
  b_{a,\ell}^{(n)} = \frac{2(a-3)}{a}\cdot\frac{\binom{L+2}{3}}{\binom{n}{3}},
  \qquad L = \ell n.
\end{equation}
As $n \to \infty$ with $\ell$ fixed these converge, and we write the limits as
\begin{equation}\label{eq:pure entries}
  m_{a,\ell} = \frac{a-2}{a}\,\ell^2,
  \qquad
  b_{a,\ell} = \frac{2(a-3)}{a}\,\ell^3.
\end{equation}
The limiting values are recorded for orientation only; the arguments of
Section~\ref{sec:upper} are carried out with the exact entries
\eqref{eq:pure entries exact}.

\begin{lemma} \label{lem:BS length} Let $G$ be a $C_4$-free graph with clique
complex $X(G)$ that is $2$-Leray over $\Bbbk$. In a Boij-S\"oderberg decomposition \eqref{eq:BS} the diagrams involved are $\pi_{a,L}$ where $4 \leq a \leq L+2$ and either:
\begin{itemize}
\item $n - \dim \Bbbk[X(G)] \leq L \leq n-2$ or,
\item $a = L+2$ and $L = n-1$.
In this case $m_{a,\ell}^{(n)} = 1$ and $b_{a,\ell}^{(n)} = 2$.
\end{itemize}
\end{lemma}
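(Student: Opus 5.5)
Every type $\mathbf d$ in the decomposition \eqref{eq:BS} of $B(G)$ has $d_1 = 2$ and, once $r \geq 2$, $d_2 = 3$, by the edge-ideal restrictions listed above. The $2$-Leray hypothesis, via Hochster's formula \eqref{eq:hochster}, makes $B(G)$ vanish in rows $d \geq 3$: an entry in row $d$ is a sum of $\dim \tilde H_{d-1}$ of induced subcomplexes, and these vanish for $d-1 \geq 2$. Since every pure summand in a Boij--Söderberg decomposition has support contained in that of the diagram (all coefficients and all pure entries are positive), each type satisfies \eqref{eq:two strands}. The classification \eqref{eq:dtype} then applies whenever $L \geq 2$, giving $\pi_{a,L}$ with $4 \leq a \leq L+2$. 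The case $L=1$ would be the type $(0,2)$, the diagram of a single quadric. It is excluded by the lower bound on lengths below unless $\omega(G) \geq n-1$, and there $G$ is $K_n$ minus at most one edge, which is a degenerate case I would treat directly.

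Next I bound the length $L$. The lower bound $L \geq n - \dim\Bbbk[X(G)]$ is \eqref{eq:min length}. For the upper bound, the projective dimension of $\Bbbk[X(G)]$ is at most $n$ by Hilbert's syzygy theorem, so every type has length $L \leq n$. The last column $L$ of a type $\mathbf d(a,L)$ sits in internal degree $L+2$ if $a \leq L+1$, and in degree $L+1$ if $a = L+2$. Hochster's formula \eqref{eq:hochster} gives $\beta_{i,j} = 0$ for $j > n$, so the pure summands, being supported in $B(G)$, satisfy $L+2 \leq n$ when $a \leq L+1$, and $L+1 \leq n$ when $a = L+2$.

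It remains to show that when $a = L+2$ we have either $L \leq n-2$ or $L = n-1$. That is, I must exclude the case $a \leq L+1$ with $L = n-1$, which is already done since it forces $L+2 \leq n$. So only the linear type $(0,2,3,\dots,n)$ can have length $n-1$. For it, \eqref{eq:pure entries exact} with $a = L+2 = n+1$ gives
\[
m^{(n)} = \frac{n-1}{n+1}\cdot\frac{\binom{n+1}{2}}{\binom n2} = 1,
\qquad
b^{(n)} = \frac{2(n-2)}{n+1}\cdot\frac{\binom{n+1}{3}}{\binom n3} = 2,
\]
by direct simplification.

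The main obstacle is justifying that the pure summands are supported inside the support of $B(G)$, together with the precise length constraints this yields. I would cite the structural form of the decomposition in \cite{eng}: the chain of types is obtained greedily from the top and bottom degree sequences of the current diagram, so each $\mathbf d^{(i)}$ lies between the minimal and maximal shifts of $B(G)$. This gives both the row bound and the bound $d_L \leq n$.
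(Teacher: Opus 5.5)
Your argument is correct and is essentially the paper's proof: each pure summand is supported inside $B(G)$ (your positivity remark already gives this, so neither the greedy algorithm of \cite{eng} nor Hilbert's syzygy theorem is needed). From there you use, as the paper does, the two-strand classification \eqref{eq:dtype}, the bound $j\le n$ from Hochster's formula, and direct substitution into \eqref{eq:pure entries exact} for $L=n-1$.

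Your flag on $L\le 1$ points to an edge case the paper passes over, but it cannot be ``treated directly''. If $\omega(G)\ge n-1$ (e.g.\ $G=K_n$ minus an edge, whose only type is $(0,2)$, i.e.\ $a=L+2=3$), the conclusion $a\ge 4$ is simply false. So the lemma tacitly needs $\omega(G)\le n-2$. This is harmless, since Theorem~\ref{thm:triangle upper bound} is easily checked by hand for $K_n$ and for $K_n$ minus an edge.
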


\begin{proof}
We first observe that $B(G)$ has at most two linear strands. Indeed, by
Hochster's formula \eqref{eq:hochster}, $\beta_{i,j}(\Bbbk[X(G)])$ is a sum of
dimensions of reduced homology groups $\tilde H_{j-i-1}(X(G)[W]; \Bbbk)$ over all 
induced subcomplexes on $j$ vertices. Since $X(G)$ is $2$-Leray, these vanish whenever
$j - i - 1 \geq 2$, so $\beta_{i,j} = 0$ unless $j - i \leq 2$. Thus the only
nonzero rows of $B(G)$ are $d = j - i \in \{0,1,2\}$; equivalently,
$\operatorname{reg}\bigl(\Bbbk[X(G)]\bigr) \leq 2$.

By Boij--S\"oderberg theory, $B(G)$ is a convex combination of pure diagrams
$\pi(\mathbf{d}^{(i)})$ as in \eqref{eq:BS}, each of which has at most two linear
strands, so each type satisfies \eqref{eq:two strands}. Since $G$ is $C_4$-free,
the restrictions of Section~\ref{sec:resolutions} give $d^{(i)}_1 = 2$ and
$d^{(i)}_2 = 3$ for every $i$. 
The classification of two-strand types after \eqref{eq:two strands} therefore applies
to each summand: writing $L_i$ for the length of $\mathbf{d}^{(i)}$, we have
$\pi(\mathbf{d}^{(i)}) = \pi_{a_i, L_i}$ for a unique $a_i$ with $4 \leq a_i \leq
L_i + 2$ (the value may differ from summand to summand, which is why it carries
the subscript $i$). 

By formula \eqref{eq:hochster}, $j \leq |V| = n$ for a nonzero $\beta_{i,j}(\Bbbk[X(G)])$. The same will then be true for all non-zero betti numbers occurring in the diagrams 
$\pi_{a_i,L_i}$ in the positive linear combination \eqref{eq:BS}.
Then by \eqref{eq:dtype} we must have $L_i+2 \leq n$ if $a_i \leq L_i+1$, or if $a_i = L_i+2$ we have $L_i+1 \leq n$.
The last statement follows by insertions into \eqref{eq:pure entries exact}.
\end{proof}

\begin{example}\label{ex:BS G3}
For the running example $G_3 = C_{13}^{\,3}$, whose Betti diagram $B(G_3)$ appears
in Example~\ref{ex:betti table G3}, the Boij--S\"oderberg decomposition
\eqref{eq:BS} has seven pure summands, each of the form $\pi_{a,L}$ of the classification above:
\[
\begin{aligned}
  B(G_3) \;=\;
    & \tfrac{1}{33}\,\pi_{8,11}
    + \tfrac{119}{1584}\,\pi_{7,11}
    + \tfrac{91}{1584}\,\pi_{7,10}
    + \tfrac{65}{264}\,\pi_{6,10} \\[2pt]
    & + \tfrac{39}{308}\,\pi_{6,9}
    + \tfrac{117}{308}\,\pi_{5,9}
    + \tfrac{13}{154}\,\pi_{4,9},
\end{aligned}
\]
a convex combination of normalized pure diagrams. The decomposition was computed in Macaulay2 \cite{M2} using the \texttt{BoijSoederberg} package. (Note that Macaulay2 lists the pure diagrams with the smallest integer solution to the Herzog--K{\"u}hl equations and adjusts the coefficients accordingly.) Every summand has $a \geq 4$,
as it must for a $C_4$-free graph, and the shortest summands have length
$9 = 13 - \omega(G_3)$, in agreement with \eqref{eq:min length} since
$\omega(G_3) = 4$.
\end{example}

\section{The upper bound}\label{sec:upper}

In this section we prove Theorem~\ref{thm:triangle upper bound}. We use the density
parameters $m$ and $b$ of Section~\ref{sec:prelim}, recalling that
$\tau = b - 3m + 1$ by \eqref{eq:t2 and t3}.
Set $\ell = 1 - \kappa$.

The key estimate is that the pure diagrams $\pi_{a,L'}$ with $a \geq 4$ and
$\ell' = L'/n \geq \ell$ all lie on or below a single line in the $(m,b)$-plane:
the line through:
\begin{itemize}
\item $(1,2)$, the value for the extremal length $L^\prime = n-1$ by Lemma \ref{lem:BS length}.
It is also the value for $L^\prime = n-2$, being the limit of
$\bigl(m^{(n)}_{a,\ell'}, b^{(n)}_{a,\ell'}\bigr)$ as $a \to \infty$,
\item $\bigl(\tfrac12\ell^2, \tfrac12\ell^3\bigr) = (m_{a,\ell},b_{a,\ell})$, the
limiting point of $\bigl(m^{(n)}_{4,\ell}, b^{(n)}_{4,\ell}\bigr)$ as $n \to \infty$.
\end{itemize}
Being pinned at these two extremes, the estimate
has no slack at either, so we prove it at the level of the \emph{exact} entries
\eqref{eq:pure entries exact}: an argument through the limiting values
\eqref{eq:pure entries} alone would not fix the direction of the inequality
there, and no passage to the limit is needed anywhere below.

\begin{lemma}\label{lem:halfplane} Let $0 \leq \ell \leq 1$.
Let $a, L, L', n$ be integers with $2 \leq L \leq L' \leq n-2$ and
$4 \leq a \leq L'+2$, and write $\ell = L/n$ and $\ell' = L'/n$, so that $\ell \leq \ell^\prime$. Then the exact entries \eqref{eq:pure entries exact} of
$\pi_{a,L'}$ satisfy
\begin{equation}\label{eq:halfplane}
  (2-\ell^2)\bigl(b_{a,\ell'}^{(n)} - 2\bigr)
  \;\leq\; (4-\ell^3)\bigl(m_{a,\ell'}^{(n)} - 1\bigr).
\end{equation}
\end{lemma}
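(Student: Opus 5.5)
The plan is to reduce \eqref{eq:halfplane} to finitely many checkable polynomial inequalities. I will use three monotonicity and linearity observations, one for each of the parameters $\ell$, $a$ and $L'$.

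\emph{Step 1: reduce to $\ell=\ell'$.} Put $s(\ell)=(4-\ell^3)/(2-\ell^2)$. Since $2-\ell^2>0$ on $[0,1]$, \eqref{eq:halfplane} is equivalent to
\[
b^{(n)}_{a,\ell'}-2\le s(\ell)\,\bigl(m^{(n)}_{a,\ell'}-1\bigr).
\]
Because $L'+2\le n$, we have $m^{(n)}_{a,\ell'}\le \binom{L'+2}{2}/\binom n2\le 1$, so the factor $m-1$ is nonpositive. A direct computation gives
\[
s'(\ell)=\frac{\ell(\ell^3-6\ell+8)}{(2-\ell^2)^2}\ge 0 \quad\text{on } [0,1],
\]
since $\ell^3-6\ell+8\ge 3>0$ there. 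Hence the right-hand side decreases as $\ell$ increases, and the inequality is hardest for the largest admissible $\ell$. It therefore suffices to prove it with $\ell=\ell'$, i.e.\ $L=L'$.

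\emph{Step 2: reduce to the endpoint values of $a$.} Write $M=\binom{L'+2}{2}/\binom n2$ and $B=\binom{L'+2}{3}/\binom n3$. By \eqref{eq:pure entries exact},
\[
m^{(n)}_{a,\ell'}=(1-2t)M,\qquad b^{(n)}_{a,\ell'}=2(1-3t)B,\qquad t=1/a .
\]
Both entries are affine in $t$, so the difference of the two sides of \eqref{eq:halfplane} is affine in $t$ on the interval $t\in[1/(L'+2),1/4]$. It suffices to verify the inequality at the two endpoints $a=4$ and $a=L'+2$.

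\emph{Step 3: the one-strand endpoint $a=L'+2$.} Here \eqref{eq:strand entries} gives entries $\binom{L'+1}{2}$ and $\tfrac13 L'(L'^2-1)$. After clearing the denominators $\binom n2$ and $\binom n3$, the inequality becomes a polynomial inequality in $L'$ and $n$. I will substitute $n=L'+2+k$ with $k\ge 0$. I expect all coefficients in $L'$ and $k$ to be nonnegative, or at least an easy factorization to exist. The equality case is $k=0$, which is the point $(1,2)$ noted before the lemma.

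\emph{Step 4: the endpoint $a=4$, the main obstacle.} Here $m=M/2$ and $b=B/2$, and the inequality reads
\[
(4-\ell^3)\bigl(1-\tfrac12M\bigr)\le (2-\ell^2)\bigl(2-\tfrac12 B\bigr),\qquad \ell=L'/n.
\]
This is pinned at both ends, at $(1,2)$ and asymptotically at $(\tfrac12\ell^2,\tfrac12\ell^3)$, so no crude estimate will do. The exact values satisfy $M>\ell^2$ and $B>\ell^3$, and these two deviations push the two sides in opposite directions.

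I would first multiply through by $n^3\cdot n(n-1)(n-2)$, which yields an integer polynomial inequality in $L'$ and $n$. I would then substitute $n=L'+2+k$ with $k\ge0$ and $L'\ge 2$, and try to show that the resulting polynomial in $(L',k)$ has nonnegative coefficients after also shifting $L'=2+j$. If some coefficients turn out negative, the fallback is to group terms: show that the excess $\tfrac12(M-\ell^2)(4-\ell^3)$ dominates $\tfrac12(B-\ell^3)(2-\ell^2)$. This uses $B-\ell^3\approx 3\ell^2/n$ and $M-\ell^2\approx 3\ell/n$, each up to lower-order terms, together with $\ell\le 1$.
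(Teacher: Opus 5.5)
Your Steps 1 and 2 are correct, and Step 1 is a genuinely different and shorter route than the paper's. The paper fixes $\ell$ and varies $L'$: for fixed $a$ the points $\bigl(m^{(n)}_{a,\ell'},b^{(n)}_{a,\ell'}\bigr)$ form a convex chain in $q'=L'+2$, so it has to check both $q'=q$ and $q'=n$, the latter by a separate computation. You observe that $s(\ell)=(4-\ell^3)/(2-\ell^2)$ is increasing on $[0,1]$ while $m^{(n)}_{a,\ell'}-1<0$. That collapses everything to the diagonal $L=L'$ in one line, so neither the chain argument nor the $q'=n$ endpoint is needed. Step 2 is the paper's affine-in-$1/a$ device, except that you stop at the admissible endpoint $a=L'+2$. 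The paper uses $a\to\infty$ instead, which proves the inequality for all $a\ge 4$.

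\textbf{The gap.} Steps 3 and 4 are only announced, and they are the whole content of the lemma. As written, this is not yet a proof. The heuristics around them also need correcting:
\begin{itemize}
\item $k=0$ is not an equality case. At $L'=n-2$, $a=n$, the point is $\bigl(\tfrac{n-2}{n},\tfrac{2(n-3)}{n}\bigr)$ and the inequality is strict. The point $(1,2)$ arises only as $a\to\infty$, or at the excluded length $L'=n-1$.
\item Your first-order excesses drop the factors $n-1$ and $n-2$ in the denominators. Correctly, your $M-\ell^2\approx\ell(3+\ell)/n$ and $B-\ell^3\approx 3\ell^2(1+\ell)/n$.
\item For bounded $L'$ the neglected terms are of the same order as the retained ones, so a first-order comparison proves nothing there.
\end{itemize}

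\textbf{How to close it.} Both steps do close.

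At $a=4$: since $(4-\ell^3)(1-\tfrac12\ell^2)=(2-\ell^2)(2-\tfrac12\ell^3)$, the inequality is exactly $(2-\ell^2)(B-\ell^3)\le(4-\ell^3)(M-\ell^2)$. Here
\[
M-\ell^2=\frac{L'^2+3nL'+2n}{n^2(n-1)},\qquad
B-\ell^3=\frac{3n^2L'^2+2n^2L'+(3n-2)L'^3}{n^3(n-1)(n-2)} ,
\]
and both are positive. Using $2-\ell^2\le 2$ and $4-\ell^3\ge 3$, it suffices to show $2(B-\ell^3)\le 3(M-\ell^2)$. After clearing denominators, the difference of the two sides is
\[
3nL'\bigl[3n(n-2)-2nL'+(n-2)L'-2L'^2\bigr]+6n^2(n-2)-4n^2L'+4L'^3 .
\]
For $L'\le n-2$ the bracket is at least $L'(n-L')$, and $6n^2(n-2)-4n^2L'>0$. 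So your fallback works, contrary to your remark that no crude estimate will do. Your coefficient plan also works: it reproduces the paper's $\tfrac14 N_0$ in \eqref{eq:N0 grouped}, whose variables $w$ and $M$ are your $L'$ and $k$.

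At $a=L'+2$: your expectation is right, and no shift $L'=2+j$ is needed. Put $n=L'+2+k$, and let $\Delta$ be the right side minus the left side of \eqref{eq:halfplane} at $L=L'$. Then
\[
n^4(n-1)(n-2)\,\Delta=L'(k+1)\bigl[3(k+4)L'^3+5(k+2)(k+6)L'^2+2(k+2)^2(k+10)L'+4(k+2)^3\bigr].
\]
In the paper's notation this is $\bigl[(L'-2)kP+N_0\bigr]/(L'+2)$.

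Writing out these two verifications is what your argument still needs.
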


\begin{proof}
Write $q = L+2$ and $q' = L'+2$ for the top degrees of the two types, and
abbreviate
\[
  Z(q') \;=\; \bigl(m_{a,\ell'}^{(n)},\, b_{a,\ell'}^{(n)}\bigr)
        \;=\; \Bigl(c_2\tbinom{q'}{2},\; c_3\tbinom{q'}{3}\Bigr),
  \qquad
  c_2 = \frac{a-2}{a\binom n2},
  \quad
  c_3 = \frac{2(a-3)}{a\binom n3},
\]
both constants being positive because $a \geq 4$. The region defined by
\eqref{eq:halfplane} is a half-plane, and it is closed downwards in $b$.

\emph{Step 1: reduction to the endpoints $q' = n$ and $q' = q$.}
The points $Z(4), Z(5), Z(6), \dots$ form a convex chain. Indeed
\[
  \binom{q'+1}{2}-\binom{q'}{2} = q',
  \qquad
  \binom{q'+1}{3}-\binom{q'}{3} = \binom{q'}{2},
\]
so the segment from $Z(q')$ to $Z(q'+1)$ has slope
\[
  \frac{c_3\binom{q'}{2}}{c_2\,q'} \;=\; \frac{c_3}{c_2}\cdot\frac{q'-1}{2},
\]
which increases strictly with $q'$. Since the first coordinate
$c_2\binom{q'}{2}$ also increases with $q'$, every $Z(q')$ with
$q \leq q' \leq n$ lies on or below the chord joining $Z(q)$ and $Z(n)$. If both
endpoints satisfy \eqref{eq:halfplane}, then so does $Z(q')$, since the half-plane is closed
downwards. It therefore suffices to treat $q' = n$ and $q' = q$.

\emph{Step 2: the endpoint $q' = n$.} Here
$\binom{q'}{2}\big/\binom n2 = \binom{q'}{3}\big/\binom n3 = 1$, so
$m_{a,\ell'}^{(n)} = \tfrac{a-2}{a}$ and
$b_{a,\ell'}^{(n)} = \tfrac{2(a-3)}{a}$, and the difference of the two sides of
\eqref{eq:halfplane} is
\begin{align*}
  (4-\ell^3)\Bigl(\frac{a-2}{a}-1\Bigr)
  &- (2-\ell^2)\Bigl(\frac{2(a-3)}{a}-2\Bigr) \\
  &\qquad =\; \frac{2\bigl(\ell^3-3\ell^2+2\bigr)}{a}
   \;=\; \frac{2\,(1-\ell)\bigl(3-(1-\ell)^2\bigr)}{a}.
\end{align*}
For $0 < \ell \leq 1$ both factors are nonnegative, since $(1-\ell)^2 \leq 1$.

\emph{Step 3: the endpoint $q' = q$.} Here both sides of \eqref{eq:halfplane} are
formed from the same $q$. Writing $\Delta$ for their difference and substituting
\eqref{eq:pure entries exact}, with $\ell = (q-2)/n$,
\begin{align*}
  \Delta
  &= (4-\ell^3)\left(\frac{a-2}{a}\cdot\frac{\binom q2}{\binom n2}-1\right)
   - (2-\ell^2)\left(\frac{2(a-3)}{a}\cdot\frac{\binom q3}{\binom n3}-2\right)
     \\[3pt]
  &= \frac{4n^3-(q-2)^3}{n^3}\cdot\frac{(a-2)q(q-1)-a\,n(n-1)}{a\,n(n-1)} \\[3pt]
  &\qquad{}- \frac{2n^2-(q-2)^2}{n^2}\cdot
     \frac{2\bigl[(a-3)q(q-1)(q-2)-a\,n(n-1)(n-2)\bigr]}{a\,n(n-1)(n-2)},
\end{align*}
each of the four factors having simply been written as a single fraction. Every
appearance of $a$ is now inside a factor $\tfrac{a-2}{a}=1-\tfrac2a$ or
$\tfrac{a-3}{a}=1-\tfrac3a$, so $\Delta$ is affine in $1/a$. Its two endpoint
values are
\begin{equation}\label{eq:endpoints}
  \Delta\big|_{a\to\infty} = \frac{(n-q)\,P}{n^4(n-1)(n-2)},
  \qquad
  \Delta\big|_{a=4} = \frac{N_0}{4\,n^4(n-1)(n-2)},
\end{equation}
with
\begin{align}
  P &= 2n\bigl[2n^2-(q-2)^2\bigr]\bigl[(q-2)(n+q-1)+n(n-1)\bigr]
       \notag \\
    &\qquad {}- (n-2)(n+q-1)\bigl[4n^3-(q-2)^3\bigr],
       \label{eq:P closed} \\[3pt]
  N_0 &= (n-2)\bigl[4n^3-(q-2)^3\bigr]\bigl[2q(q-1)-4n(n-1)\bigr]
       \notag \\
    &\qquad {}- 2n\bigl[2n^2-(q-2)^2\bigr]
        \bigl[q(q-1)(q-2)-4n(n-1)(n-2)\bigr].
       \label{eq:N0 closed}
\end{align}
Being affine in $1/a$, and with $1/a\in(0,\tfrac14]$ for $a\ge 4$, the value
$\Delta$ is a convex combination of these endpoints:
\begin{equation}\label{eq:convex comb}
  \Delta \;=\; \Bigl(1-\tfrac{4}{a}\Bigr)\,\Delta\big|_{a\to\infty}
             \;+\; \tfrac{4}{a}\,\Delta\big|_{a=4},
  \qquad a\ge 4,
\end{equation}
both weights nonnegative. It therefore suffices to show that the two endpoints
are nonnegative, that is, that $P\ge 0$ and $N_0\ge 0$ for $4\le q\le n$. We pass
to the variables
\[
  w = q-2 = L, \qquad M = n-q,
\]
so that $w\ge 2$ and $M\ge 0$ on this range. Under $q=w+2$, $n=w+M+2$ the six
polynomials from which $P$ is built in \eqref{eq:P closed} become short and
have nonnegative coefficients:
\begin{gather*}
  2n = 2M+2w+4, \qquad n-2 = M+w, \qquad n+q-1 = M+2w+3, \\
  2n^2-(q-2)^2 = 2M^2+4Mw+8M+w^2+8w+8, \\
  (q-2)(n+q-1)+n(n-1) = M^2+3Mw+3M+3w^2+6w+2, \\
  4n^3-(q-2)^3 = 4M^3+12M^2w+24M^2+12Mw^2 \\
  {}+48Mw+48M+3w^3+24w^2+48w+32.
\end{gather*}
Multiplying out the two products in \eqref{eq:P closed} and collecting by powers
of $w$ gives
\begin{multline}\label{eq:P grouped}
  P = 3(M+5)\,w^4 + (5M^2+57M+100)\,w^3 + 2(M+2)(M^2+23M+54)\,w^2 \\
    {}+ 12(M+2)^2(M+4)\,w + 8(M+2)^3,
\end{multline}
and the same substitution in \eqref{eq:N0 closed} gives
\begin{multline}\label{eq:N0 grouped}
  \tfrac14 N_0 = 3\,w^5 + 3(5M+7)\,w^4 + 2(15M^2+44M+25)\,w^3 \\
    {}+ (M+2)(23M^2+70M+24)\,w^2 + 2(M+2)^2(3M^2+13M+2)\,w + 4M(M+2)^3.
\end{multline}
Every coefficient of $w$ in \eqref{eq:P grouped} and \eqref{eq:N0 grouped} is a
polynomial in $M$ with nonnegative coefficients, so $P\ge 0$ and $N_0\ge 0$ for
$w,M\ge 0$, in particular on our range. By \eqref{eq:endpoints} both endpoints of
$\Delta$ are then nonnegative, and by the convex combination \eqref{eq:convex comb}
so is $\Delta$ itself, for every $a\ge 4$.
\end{proof}

\begin{remark}
The finite-$n$ computation cannot be replaced by a limit. As $n \to \infty$ with
$q = \ell n + O(1)$ and $\ell' = \ell$, the difference $\Delta$ of the two sides
approaches
\[
  a\bigl[(4-\ell^3)(m_{a,\ell}-1) - (2-\ell^2)(b_{a,\ell}-2)\bigr]
  \;=\; \ell^2(a-4)(\ell-1)^2(\ell+2),
\]
which vanishes at $a = 4$ and as $\ell \to 1$, the two values where the line is
pinned. Since $\ell \leq 1 - 2/n$, this limiting slack is $O(n^{-2})$, whereas
the exact entries \eqref{eq:pure entries exact} differ from their limits
\eqref{eq:pure entries} by $O(n^{-1})$; a limit argument thus loses the
inequality, and the endpoints \eqref{eq:endpoints} must be checked exactly.
\end{remark}

We can now prove the upper bound.

\begin{proof}[Proof of Theorem~\ref{thm:triangle upper bound}]
Let $G$ be $C_4$-free with clique complex $X(G)$ that is $2$-Leray over $\Bbbk$, and let
$B(G)$ be its normalized Betti diagram, as in Section~\ref{sec:resolutions}.

Let $\ell:= 1 - \kappa$. By \eqref{eq:min length} and 
Lemma \ref{lem:BS length} each summand $\pi_{a_i,L_i}$ in the (convex) 
BS-decomposition of $B(G)$ fulfills the conditions of Lemma \ref{lem:halfplane}. 
The density parameters $m$ and $b$ for $B(G)$, being convex combinations
of the density parameters of each $\pi_{a_i,L_i}$, 
then also fulfill:
\begin{equation}\label{eq:b upper}
  b \;\leq\; \frac{4-\ell^3}{2-\ell^2}\,(m - 1) + 2.
\end{equation}
Substituting 
\[ \ell = 1 - \kappa, \quad m = 1 - \varepsilon,  \quad \tau = b - 3m + 1 \]
and simplifying, gives:
\[
  \tau \;\leq\; \frac{(3-\kappa^2)\,\kappa\,\varepsilon}{1 + 2\kappa - \kappa^2},
\]
which is \eqref{eq:upper}.
\end{proof}

\section{The lower bound}\label{sec:lower}

In this section we prove Theorem~\ref{thm:triangle lower bound}. The argument is
elementary and uses no hypothesis beyond $C_4$-freeness. Throughout we use the
density parameter $b = 2t_0 + t_1$ of Section~\ref{sec:prelim}, and freely use the
identities \eqref{eq:normalized census} and \eqref{eq:t2 and t3}. All estimates
below are exact at every finite $n$; in particular we do not discard the
normalization factors $\tfrac{n}{n-2}$ that arise, since they approach $1$ from
the side that would invalidate the finite statements.

We bound the induced $2$-path density $t_2$ from below in two ways, and from above
using $C_4$-freeness.

\begin{lemma}\label{lem:t2 lower trivial}
For any graph $G = (V,E)$ we have $t_2 \geq 3m - 2b$.
\end{lemma}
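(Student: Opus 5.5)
The plan is to express everything through the normalized census \eqref{eq:normalized census} and the definition $b = 2t_0 + t_1$. Then the claimed inequality $t_2 \geq 3m - 2b$ becomes a linear inequality in the $t_i$, and it should reduce to nonnegativity of some of them.

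First, from the identity $t_2 = 3m - 3t_0 - 2t_1$ recorded in \eqref{eq:t2 and t3} (obtained by eliminating $t_3$ from the first two equations of \eqref{eq:normalized census}), we get directly
\[
  t_2 - (3m - 2b) \;=\; -3t_0 - 2t_1 + 2(2t_0 + t_1) \;=\; t_0 \;\geq\; 0 .
\]
So the inequality is equivalent to $t_0 \geq 0$, which is trivial. Equality holds exactly when $G$ has no independent set of size three.

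As a sanity check I would re-derive $t_2 = 3m - 3t_0 - 2t_1$ from scratch. Subtracting three times the first equation of \eqref{eq:normalized census} from the second gives
\[
  -3t_0 - 2t_1 - t_2 = 3(1-m) - 3 = -3m ,
\]
which is indeed $t_2 = 3m - 3t_0 - 2t_1$. There is no real obstacle here; the only point needing care is keeping the sign conventions for $m = 1-\varepsilon$ and $b$ consistent with Section~\ref{sec:prelim}.
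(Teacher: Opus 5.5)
Your proof is correct and is the paper's argument: both combine $t_2 = 3m - 3t_0 - 2t_1$ with $b = 2t_0 + t_1$, so that $t_2 - (3m - 2b) = t_0 \geq 0$. Your re-derivation of that identity from the census equations and your equality case ($t_0 = 0$, i.e.\ no independent set of size three) are both accurate.
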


\begin{proof}
By \eqref{eq:t2 and t3} we have $t_2 = 3m - 3t_0 - 2t_1$. Since $t_0 \geq 0$,
\[
  t_2 = 3m - 3t_0 - 2t_1 \;\geq\; 3m - 4t_0 - 2t_1 = 3m - 2b,
\]
using $b = 2t_0 + t_1$.
\end{proof}

\begin{lemma}\label{lem:t2 lower jensen}
For any graph $G = (V,E)$ on $n \geq 3$ vertices we have
\[
  t_2 \;\geq\; 3(m^2 + m - b) - \frac{3\varepsilon m}{n-2}.
\]
\end{lemma}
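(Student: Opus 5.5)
The plan is to write $t_2$ in terms of the degree sum $D$ and the Betti density $b$, so that the inequality reduces to a lower bound on $D$. That bound comes from convexity of $x\mapsto\binom{x}{2}$ (Jensen) applied to the degree sequence. This is why the lemma needs no hypothesis on $G$ beyond $n\ge 3$.

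\emph{Step 1: rewrite $t_2$.} By \eqref{eq:t2 and t3} we have $t_2 = D - 3t_3$ and $t_3 = \tau = b - 3m + 1$. Hence
\[
  t_2 = D - 3b + 9m - 3 .
\]
Subtracting $3(m^2+m-b)$ and using $1-m=\varepsilon$, the claimed inequality becomes equivalent to
\[
  D \;\geq\; 3(1-m)^2 - \frac{3\varepsilon m}{n-2} \;=\; 3\varepsilon^2 - \frac{3\varepsilon(1-\varepsilon)}{n-2}.
\]
The terms in $b$ cancel exactly, so only the degree sequence matters.

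\emph{Step 2: Jensen.} The function $x\mapsto \binom{x}{2}=x(x-1)/2$ is convex on $\mathbb{R}$. The average degree is $\bar d = 2|E|/n = \varepsilon(n-1)$. Therefore
\[
  \sum_v \binom{\deg v}{2} \geq n\,\frac{\bar d(\bar d-1)}{2}.
\]
Dividing by $\binom n3 = n(n-1)(n-2)/6$ gives
\[
  D \geq \frac{3\varepsilon\bigl(\varepsilon(n-1)-1\bigr)}{n-2}.
\]
Writing $\varepsilon(n-1)-1 = \varepsilon(n-2) - (1-\varepsilon)$ turns the right-hand side into exactly $3\varepsilon^2 - 3\varepsilon(1-\varepsilon)/(n-2)$. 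This is the bound required in Step 1.

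There is no real obstacle. The only care needed is to keep the exact finite-$n$ form of the Jensen bound rather than replacing $\bar d(\bar d-1)$ by $\bar d^2$. Doing the latter would drop the correction term $-3\varepsilon m/(n-2)$, which has a definite sign, and the stated finite inequality would then not follow. One should also note that Jensen applies to the real extension of $\binom{x}{2}$ even though $\bar d$ need not be an integer. This is harmless because the inequality is used only in the direction $\sum_v f(\deg v)\ge n f(\bar d)$.
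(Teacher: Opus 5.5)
Your proposal is correct and follows essentially the same route as the paper: Jensen's inequality for $x\mapsto\binom{x}{2}$ at the average degree $\varepsilon(n-1)$ gives $D\geq 3\varepsilon^{2}-\frac{3\varepsilon m}{n-2}$, and the identities $t_2=D-3t_3$ and $t_3=b-3m+1$ turn this into the stated bound. The only difference is the order of presentation: you first reduce the claim to the bound on $D$, while the paper proves the bound on $D$ and then substitutes.
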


\begin{proof}
Applying Jensen's inequality to the convex function $x \mapsto \binom{x}{2}$ and
using $\sum_{v} \deg v = 2|E| = \varepsilon\, n(n-1)$, so that the average degree
is $\varepsilon(n-1)$, we obtain
\begin{align*}
  D = \frac{1}{\binom{n}{3}} \sum_{v \in V} \binom{\deg v}{2}
    &\;\geq\; \frac{n}{\binom{n}{3}} \binom{\varepsilon(n-1)}{2}
     \;=\; \frac{3\varepsilon\bigl(\varepsilon(n-1)-1\bigr)}{n-2} \\
    &\;=\; 3\varepsilon^{2} - \frac{3\varepsilon(1-\varepsilon)}{n-2}
     \;=\; 3\varepsilon^{2} - \frac{3\varepsilon m}{n-2}.
\end{align*}
Combining
$t_2 = D - 3t_3$ from \eqref{eq:t2 and t3} with $t_3 = b - 3m + 1$ and
$\varepsilon = 1-m$ gives
\[
  t_2 = D - 3t_3 \;\geq\; 3\bigl(\varepsilon^{2} - t_3\bigr)
        - \frac{3\varepsilon m}{n-2}
      \;=\; 3(m^{2} + m - b) - \frac{3\varepsilon m}{n-2}. \qedhere
\]
\end{proof}

\begin{remark}\label{rem:C5}
The correction term cannot be omitted. For $G = C_5$ one has
$\varepsilon = m = b = \tfrac12$ and $t_2 = \tfrac12$, so that
$3(m^2+m-b) = \tfrac34 > t_2$, while
$3(m^2+m-b) - \tfrac{3\varepsilon m}{n-2} = \tfrac34 - \tfrac14 = \tfrac12$
and Lemma~\ref{lem:t2 lower jensen} holds with equality.
\end{remark}

The next lemma is the only place where $C_4$-freeness enters.

\begin{lemma}\label{lem:t2 upper C4free}
For any $C_4$-free graph $G = (V,E)$ on $n \geq 3$ vertices we have
\[
  t_2 \;\leq\; \frac{3m\,\omega(G)}{n-2} \;=\; 3m\kappa\cdot\frac{n}{n-2}.
\]
\end{lemma}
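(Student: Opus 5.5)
Every induced $2$-path has nonadjacent endpoints, so I would count $|T_2|$ by grouping the induced $2$-paths according to their endpoint pair. For a non-edge $\{u,w\}$ of $G$, the induced $2$-paths with endpoints $u,w$ correspond bijectively to the common neighbours of $u$ and $w$; indeed, a triple $\{u,x,w\}$ with $ux,xw\in E$ and $uw\notin E$ spans exactly two edges. This gives the identity
\[
  |T_2| \;=\; \sum_{\{u,w\}\notin E} |N(u)\cap N(w)|.
\]

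Next I would use $C_4$-freeness, exactly as in the observation from the introduction. If $x,y\in N(u)\cap N(w)$ were distinct and nonadjacent, then $u,x,w,y$ would induce a $4$-cycle, since $uw\notin E$ and $xy\notin E$. Hence $N(u)\cap N(w)$ is a clique, so $|N(u)\cap N(w)|\le\omega(G)$ for every non-edge. The number of non-edges is $\beta_{1,2}=m\binom n2$, and therefore
\[
  |T_2|\;\le\; m\binom n2\,\omega(G).
\]

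Finally I would normalize, using $\binom n2/\binom n3 = 3/(n-2)$:
\[
  t_2 \;=\; \frac{|T_2|}{\binom n3} \;\le\; \frac{3m\,\omega(G)}{n-2} \;=\; 3m\kappa\cdot\frac{n}{n-2},
\]
since $\omega(G)=\kappa n$. The only point needing care is confirming that no correction term arises. Each induced $2$-path is counted exactly once in the sum, namely at its unique non-adjacent pair, so the inequality is exact at every $n\ge 3$.
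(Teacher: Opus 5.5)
Your proposal is correct and follows essentially the same route as the paper: you group induced $2$-paths by their unique non-adjacent endpoint pair, use $C_4$-freeness to show the common neighbourhood of a non-edge is a clique of size at most $\omega(G)$, sum over the $m\binom n2$ non-edges, and normalize using $\binom n2/\binom n3 = 3/(n-2)$.
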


\begin{proof}
Fix two non-adjacent vertices $u$ and $w$, and let $v_1, \dots, v_k$ be the
vertices for which $u v_i w$ is an induced $2$-path, that is, the common neighbors
of $u$ and $w$. If some $v_i, v_j$ were non-adjacent, then
$u v_i w v_j$ would be an induced $4$-cycle; since $G$ is $C_4$-free, the
$v_1, \dots, v_k$ form a clique, so $k \leq \omega(G) = \kappa n$. As every induced
$2$-path has a unique non-adjacent pair of endpoints, summing over the
$\binom{n}{2} - |E| = m\binom{n}{2}$ non-edges gives
\[
  |T_2| \;\leq\; \omega(G) \cdot m\binom{n}{2}
         \;=\; \frac{3m\,\omega(G)}{n-2}\binom{n}{3}.
\]
Note that the last
expression is \emph{larger} than $3m\kappa\binom n3$, by the factor
$\tfrac{n}{n-2}$; the two agree only in the limit.
\end{proof}

\begin{corollary}\label{cor:b lower}
For any $C_4$-free graph $G = (V,E)$ on $n \geq 3$ vertices we have
\[
  b \;\geq\; \max\left\{\,
      m^2 + m - \frac{m(\kappa n + \varepsilon)}{n-2}
      \, , \;\;
      \tfrac{3}{2}m\left(1 - \frac{\kappa n}{n-2}\right)
  \,\right\}.
\]
\end{corollary}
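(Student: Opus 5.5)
The corollary follows by sandwiching $t_2$: combine the $C_4$-free upper bound of Lemma~\ref{lem:t2 upper C4free} with each of the two lower bounds on $t_2$, and solve each resulting inequality for $b$. Write $U = \tfrac{3m\kappa n}{n-2}$ for the upper bound, valid since $G$ is $C_4$-free and $n\ge 3$.

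\emph{First branch.} Lemma~\ref{lem:t2 lower jensen} gives
\[
3(m^2+m-b) - \frac{3\varepsilon m}{n-2} \;\leq\; t_2 \;\leq\; \frac{3m\kappa n}{n-2}.
\]
Dividing by $3$ and rearranging yields
\[
b \;\geq\; m^2 + m - \frac{m(\kappa n+\varepsilon)}{n-2},
\]
which is the first entry of the maximum.

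\emph{Second branch.} Lemma~\ref{lem:t2 lower trivial} gives $3m - 2b \le t_2 \le \tfrac{3m\kappa n}{n-2}$. Solving for $b$ gives
\[
b \;\geq\; \tfrac32 m\Bigl(1 - \frac{\kappa n}{n-2}\Bigr),
\]
the second entry.

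Since both inequalities hold simultaneously, $b$ is at least their maximum, which proves the corollary. The only point needing care is to keep the exact factor $\tfrac{n}{n-2}$ from Lemma~\ref{lem:t2 upper C4free} rather than replacing it by $1$. As the paper stresses, that substitution would go in the wrong direction, so no limit should be taken. Otherwise there is no real obstacle; the argument is two lines of linear rearrangement.

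For orientation, substituting $b = \tau + 3m - 1$ into the corollary should then recover Theorem~\ref{thm:triangle lower bound}. For example, the first branch becomes $\tau \ge 1 - 2m + m^2 - \tfrac{m(\kappa n+\varepsilon)}{n-2} = \varepsilon^2 - \tfrac{(1-\varepsilon)(\kappa n+\varepsilon)}{n-2}$, as desired.
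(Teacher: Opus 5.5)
Your proof is correct and is the paper's argument: you pair the $C_4$-free upper bound $t_2 \le \tfrac{3m\kappa n}{n-2}$ of Lemma~\ref{lem:t2 upper C4free} with each lower bound on $t_2$ and solve for $b$. Your matching is the right one (Lemma~\ref{lem:t2 lower jensen} yields the first entry and Lemma~\ref{lem:t2 lower trivial} the second), whereas the paper's one-line proof attributes them the other way round.
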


\begin{proof}
The two bounds come from pairing the upper estimate
$t_2 \leq \tfrac{3m\kappa n}{n-2}$ of Lemma~\ref{lem:t2 upper C4free} with each
lower estimate for $t_2$ and solving for $b$: Lemma~\ref{lem:t2 lower trivial}
gives the first, and Lemma~\ref{lem:t2 lower jensen} the second.
\end{proof}

\begin{proof}[Proof of Theorem~\ref{thm:triangle lower bound}]
By \eqref{eq:t2 and t3} we have $\tau = t_3 = b - 3m + 1$. Substituting the two
lower bounds of Corollary~\ref{cor:b lower},
\[
  \tau \;\geq\; \max\left\{\,
      m^2 - 2m + 1 - \frac{m(\kappa n+\varepsilon)}{n-2}
      \, , \;\;
      1 - \tfrac32m\left(1 + \frac{\kappa n}{n-2}\right)
  \,\right\}.
\]
Writing $m = 1-\varepsilon$, the first branch is
$m^2 - 2m + 1 - \tfrac{m(\kappa n+\varepsilon)}{n-2}
 = \varepsilon^2 - \tfrac{(1-\varepsilon)(\kappa n+\varepsilon)}{n-2}$
and the second is
$1 - \tfrac32(1-\varepsilon)\bigl(1+\tfrac{\kappa n}{n-2}\bigr)$, which is
\eqref{eq:lower}. Since $\tfrac{\kappa n+\varepsilon}{n-2} \to \kappa$ and
$\tfrac{\kappa n}{n-2} \to \kappa$ as $n \to \infty$, the right-hand side of
\eqref{eq:lower} converges to \eqref{eq:lower limit}.
\end{proof}

\section{Further discussion}\label{sec:discussion}

\paragraph{A clique-density bound by sandwiching}
Combining the two triangle-density bounds gives a lower bound on the
clique-number density in terms of $\varepsilon$ alone: a $C_4$-free graph must
satisfy both \eqref{eq:upper} and \eqref{eq:lower}, and eliminating $\tau$
between them yields \eqref{eq:sandwich low} and \eqref{eq:sandwich high}, using
the limiting form \eqref{eq:lower limit}. The resulting bound is therefore
asymptotic, as noted in the introduction. The two relations share the same
right-hand side $\tfrac{(3-\kappa^2)\kappa\varepsilon}{1+2\kappa-\kappa^2}$, so
which is binding is decided by their left-hand sides: \eqref{eq:sandwich low} is
active at small $\kappa$ and \eqref{eq:sandwich high} at large $\kappa$. 
Writing
$\ell = 1-\kappa$, the two left-hand sides are equal precisely when
$\varepsilon = 1 - \tfrac{\ell}{2}$.
Substituting this into either branch leaves
a single equation in $\ell$, the cubic $\ell^3 + 4\ell^2 - 14\ell + 8 = 0$, whose
unique root in $(0,1)$ is
\[
  \ell_0 = -\frac{4}{3} + \frac{2\sqrt{58}}{3}\,
    \cos\!\left( \tfrac{1}{3}\arccos\!\left( -\tfrac{106\sqrt{58}}{841} \right)
      - \tfrac{2\pi}{3} \right)
  \;\approx\; 0.7780.
\]
Solving each branch for $\kappa$ then gives the piecewise curve of
Figure~\ref{fig:bounds}: the purple portion ($\kappa \leq 1-\ell_0$) from
\eqref{eq:sandwich low}, the black portion ($\kappa \geq 1-\ell_0$) from
\eqref{eq:sandwich high}. This clique-density bound inherits the Leray hypothesis
of Theorem~\ref{thm:triangle upper bound}: it holds only for $C_4$-free graphs
whose clique complex is $2$-Leray over $\Bbbk$.

\paragraph{Comparison with known bounds}
It is instructive to compare the resulting bound with the two other curves drawn
in Figure~\ref{fig:bounds}. As $\varepsilon \to 0$, expanding the active branch
\eqref{eq:sandwich low} gives
\[
  \kappa \;=\; \varepsilon^2 - 2\varepsilon^3 + 4\varepsilon^4 - \cdots,
\]
so $\kappa \sim \varepsilon^2$ for sufficiently small $\varepsilon$. This has the same quadratic
order as the general $C_4$-free bound \eqref{eq:holmsen} of \cite{Holmsen2020}
(the red curve in Figure~\ref{fig:bounds}), but with leading coefficient $1$
rather than $\tfrac14$. The coefficient $1$ matches the refined small-$\varepsilon$
estimate of \cite{GyarfasHubenkoSolymosi2002}, while our bound holds across the
whole range. Across $\varepsilon \in (0,1)$ our bound (the purple and black curve
in Figure~\ref{fig:bounds}) lies strictly above the red curve and strictly below
the blue chordal curve $\kappa = 1-\sqrt{1-\varepsilon}$ of \eqref{eq:chordal};
thus it interpolates between the two, as one would expect of a bound for a graph
class lying between chordal and general $C_4$-free graphs. At the other extreme,
as $\varepsilon \to 1$ the active branch \eqref{eq:sandwich high} has the
expansion
\[
  \varepsilon \;=\; 1 - \tfrac12(\kappa-1)^2 + \tfrac18(\kappa-1)^3
    - \tfrac{3}{32}(\kappa-1)^4 + \cdots,
\]
whose leading terms agree with $\varepsilon = \tfrac12 + \kappa - \tfrac12\kappa^2$,
that is, with the curve $\kappa = 1-\sqrt{2}\sqrt{1-\varepsilon}$; this is the
asymptotic shape of our bound near $\varepsilon = 1$.

\paragraph{The point $\varepsilon = \tfrac12$}
The edge density $\varepsilon = \tfrac12$ is of special interest, since the
circulant graphs $G_k = C_{4k+1}^k$ of Example~\ref{ex:circulant graph} give the
best construction known there, with clique-number density approaching
$\tfrac14$. Our bound at this density is obtained by setting
$\varepsilon = \tfrac12$ in \eqref{eq:sandwich low}, which reduces to the cubic
\[
  4\ell^3 - 7\ell^2 - 4\ell + 6 = 0,
\]
whose unique root in $(0,1)$ is
\[
  \ell_1 = \frac{7}{12} + \frac{\sqrt{97}}{6}\,
    \cos\!\left( \tfrac{1}{3}\arccos\!\left( -\tfrac{449\sqrt{97}}{9409} \right)
      - \tfrac{2\pi}{3} \right)
  \;\approx\; 0.8499,
\]
giving $\kappa \geq 1 - \ell_1 \approx 0.1501$. There remains a gap between this
lower bound and the value $\tfrac14$ attained by the construction. While
Gy{\'a}rf{\'a}s and S{\'a}rk{\"o}zy \cite{GyarfasSarkozy2017} settled the regular
case, showing that $2k$-regular $C_4$-free graphs on $4k+1$ vertices cannot beat
$\kappa = \tfrac14$, the question over all $C_4$-free graphs is open. They also
show that more can be said under a hypothesis on the \emph{minimum} 
degree~\cite[Theorem 3]{GyarfasSarkozy2017}: a $C_4$-free graph with 
$\delta(G) \leq \tfrac{11}{15}n$
satisfies $\omega(G) \geq \delta(G) - \tfrac n3$. In particular, taking 
$\delta(G) \geq \tfrac n2$ yields $\omega(G) \geq \tfrac n6$, and they remark that whether 
the same conclusion follows from the edge-density hypothesis $\varepsilon \geq \tfrac 12$ remains open.

\paragraph{The chordal bound revisited}
Boij-S{\"o}derberg theory nearly immediately recovers the classical chordal bound 
\eqref{eq:chordal}, without appeal to perfectness, 
on which the proof of \cite[Theorem 3]{GyarfasHubenkoSolymosi2002} relied. 
A graph $G$ is chordal precisely when its clique complex is $1$-Leray, 
so by Hochster's formula the Betti diagram $B(G)$ then has a \emph{single} linear strand: 
its only nonzero rows are $d = j-i \in \{0,1\}$. By the classification of two-strand types
the one-strand pure diagrams are exactly those of type $\mathbf{d}(L+2,L) = (0,2,3,\dots,L+1)$, the
boundary case $a = L+2$ of the family $\pi_{a,L}$. By \eqref{eq:strand entries}
their first strand entry is $\hat\beta_{1,2} = \binom{L+1}{2}$, so such a diagram
of length $\ell n$ has density parameter $m \to \ell^2$.

Write the Boij--S\"oderberg decomposition $B(G) = \sum_i \alpha_i \pi_{L_i+2,\,L_i}$.
Since density is additive over the decomposition, $m = \sum_i \alpha_i \ell_i^2$,
where $\ell_i$ is the length of the $i$th summand. By \eqref{eq:min length} each
$\ell_i \geq 1 - \kappa$, and as the weights $\alpha_i$ are convex the sum is
minimized when every $\ell_i = 1-\kappa$, that is, when the decomposition is a
single one-strand diagram of length $(1-\kappa)n$. Any longer summand would only
increase $m = \hat\beta_{1,2}$, the missing-edge density, and so decrease the
number of edges of $G$. Thus, for a chordal graph of clique-number density
$\kappa$,
\[
  m \;\geq\; (1-\kappa)^2,
  \qquad\text{equivalently}\qquad
  \kappa \;\geq\; 1 - \sqrt{1 - \varepsilon},
\]
with equality for the graph realizing the single diagram $\pi_{L+2,L}$, where
$1 - \kappa = \ell = L/n$. This is the chordal
bound \eqref{eq:chordal}, and the argument shows it is attained, recovering the
result of \cite{AbbottKatchalski1979, GyarfasHubenkoSolymosi2002}. The extremal
graph is the complete split graph $\overline{K_{L+1}}\ast K_{n-L-1}$, the join of an
independent set of size $L+1$ with a clique on the remaining vertices:
it is chordal, has clique number $n - L$, and its only missing edges are the
$\binom{L+1}{2}$ within the independent set, so $m = \binom{L+1}{2}/\binom{n}{2} \to
(1-\kappa)^2$. Its single one-strand diagram is precisely that of the edge ideal
of $K_{L+1}$, which has a linear resolution.

\paragraph{Forbidding longer holes}
The chordal bound is the extreme case of a more general estimate. Forbidding only
the shortest holes --- those of length up to some $g$, rather than all holes at
once --- still bounds the clique density from below, by a curve of the same shape
that approaches the $C_4$-free case as $g$ decreases.

\begin{prop}\label{prop:holes}
Let $g \geq 4$, and let $G$ be a graph on $n \geq 3$ vertices with at least one
edge, whose clique complex is $2$-Leray over $\Bbbk$ and which has no hole of
length at most $g$. Then
\begin{equation}\label{eq:hole exact}
  m \;\geq\; \frac{g-2}{g}\cdot
    \frac{\binom{n-\omega(G)+2}{2}}{\binom{n}{2}}
  \geq  \frac{g-2}{g}\,(1-\kappa)^2 ,
\end{equation}
with the middle term approaching the last term as $n \to \infty$.
Consequently
\begin{equation}\label{eq:hole bound}
  \kappa \;\geq\; 1 - \sqrt{\frac{g}{g-2}}\;\sqrt{1-\varepsilon}.
\end{equation}
\end{prop}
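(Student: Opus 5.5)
The plan is to repeat the chordal argument of the preceding paragraph, replacing the one-strand summands by the two-strand summands $\pi_{a,L}$ of Lemma~\ref{lem:BS length}. The hole hypothesis will supply a uniform lower bound on the parameter $a$.

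\emph{Step 1: every summand has $a\geq g$.} Since $g\geq 4$, $G$ is $C_4$-free, and its clique complex is $2$-Leray. Hence Lemma~\ref{lem:BS length} applies, and every summand of the decomposition \eqref{eq:BS} is some $\pi_{a_i,L_i}$ with $L_i\geq n-\omega(G)$. In the type $\mathbf d(a,L)$ of \eqref{eq:dtype}, the first entry in row $2$ sits in column $a-1$ at internal degree $a+1$. By the theorem of Eisenbud--Green--Hulek--Popescu recalled in Section~\ref{sec:resolutions}, every type satisfies $d_p-p\leq 1$ whenever $d_p<h$, where $h\geq g+1$ is the shortest hole length. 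So a genuine second strand forces $a+1\geq g+1$, that is $a_i\geq g$, and therefore $\frac{a_i-2}{a_i}\geq\frac{g-2}{g}$.

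\emph{Step 2: the bound on $m$.} By \eqref{eq:strand entries}, $m$ is the convex combination
\[
m=\sum_i\alpha_i\,\frac{a_i-2}{a_i}\binom{L_i+2}{2}\Big/\binom n2 .
\]
Combining $a_i\geq g$ with the monotonicity of $\binom{L+2}{2}$ in $L$ and $L_i\geq n-\omega(G)$, each term is at least $\frac{g-2}{g}\binom{n-\omega+2}{2}/\binom n2$. Since $\sum_i\alpha_i=1$, this gives the first inequality of \eqref{eq:hole exact}.

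\emph{Step 3: the second inequality and \eqref{eq:hole bound}.} With $L=n-\omega$,
\[
(L+2)(L+1)\geq L^2 \quad\text{and}\quad n(n-1)\leq n^2,
\]
so $\binom{L+2}{2}/\binom n2\geq (L/n)^2=(1-\kappa)^2$. The ratio also tends to $(1-\kappa)^2$ as $n\to\infty$. Finally, $m=1-\varepsilon$, so
\[
(1-\kappa)^2\leq \frac{g}{g-2}(1-\varepsilon).
\]
Taking square roots, with $1-\kappa\geq 0$, yields \eqref{eq:hole bound}. The hypothesis that $G$ has an edge ensures $\omega\geq 2$, so the types are nondegenerate.

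\emph{Main obstacle: the one-strand summands $a=L+2$.} For these, $a$ does not record a hole, so Step 1 does not apply. Their entry is $\frac{L}{L+2}\binom{L+2}{2}=\binom{L+1}{2}$, which dominates $\frac{g-2}{g}\binom{L+2}{2}$ precisely when $L\geq g-2$. I would first try to show this always holds, i.e.\ $L_i\geq n-\omega\geq g-2$ for any one-strand summand that occurs. Failing that, I would compare directly with the minimal-length summand: since $\binom{L_i+1}{2}\geq\binom{n-\omega+1}{2}$, the convex combination is still at least $\frac{g-2}{g}\binom{n-\omega+2}{2}$ once $n-\omega\geq g-2$. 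The remaining small-codimension case would be checked separately, or absorbed into the asymptotic inequality $m\geq\frac{g-2}{g}(1-\kappa)^2$ using the slack between $\binom{L+1}{2}/\binom n2$ and $(L/n)^2$.
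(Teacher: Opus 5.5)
Your route is the paper's, and the obstacle you flag at the end is real. It defeats the first inequality of \eqref{eq:hole exact} itself, not only your argument for it. The paper's proof also asserts $a_i\geq g$ for \emph{every} summand, but for a one-strand summand $a_i=L_i+2$ records no hole, exactly as you say.

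Here is a counterexample. Take $G=K_n$ minus one edge, $n\geq 3$. It is chordal, hence $2$-Leray with no holes, and $\omega=n-1$. So $m\binom n2=1$, while the middle term of \eqref{eq:hole exact} times $\binom n2$ is $3\cdot\tfrac{g-2}{g}\geq\tfrac32$.

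More generally, the complete split graph $\overline{K_{c+1}}\ast K_{n-c-1}$ has a single one-strand summand of length $c=n-\omega$. Its $m\binom n2=\binom{c+1}{2}$ is smaller than $\tfrac{g-2}{g}\binom{c+2}{2}$ exactly when $g>c+2$.

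So your first two remedies cannot work. The inequality $n-\omega\geq g-2$ need not hold when one-strand summands occur. The small-codimension case cannot be \emph{checked separately}, because the inequality is false there.

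What survives is the following. If $n-\omega\geq g-2$, your second remedy does give the middle inequality. For a one-strand summand with $L\geq c$, $\binom{L+1}{2}=\tfrac{L}{L+2}\binom{L+2}{2}$ and $\tfrac{L}{L+2}\geq\tfrac{c}{c+2}\geq\tfrac{g-2}{g}$.

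Unconditionally, your third remedy proves the outer inequality $m\geq\tfrac{g-2}{g}(1-\kappa)^2$, which is all \eqref{eq:hole bound} needs. Send genuine two-strand summands through the middle term as in your Steps 1--3. Compare one-strand summands directly with the last term:
$\binom{L+1}{2}/\binom n2=\tfrac{L(L+1)}{n(n-1)}\geq (L/n)^2\geq(1-\kappa)^2\geq\tfrac{g-2}{g}(1-\kappa)^2$.
Then average over the decomposition.

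Commit to that version. In Step 2, either drop the claim that every summand dominates the middle term, or add the hypothesis $n-\omega\geq g-2$. As written, Steps 1--2 assert a false statement.
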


\begin{proof}
Write $B(G) = \sum_i \alpha_i \pi(\mathbf{d}^{(i)})$ as in \eqref{eq:BS}. Each
type has at most two linear strands and, $G$ being $C_4$-free, is of the form
$\mathbf{d}(a_i, L_i)$. Its row-$2$ strand opens in internal degree $a_i+1$;
since $G$ has no hole of length at most $g$, the last restriction of
Section~\ref{sec:resolutions} forces $d_p - p \leq 1$ for all $d_p \leq g$, so
$a_i + 1 > g$, that is $a_i \geq g$, for every $i$. Since $G$ has an edge,
$\omega(G) \geq 2$, so $L_i \geq n - \omega(G)$ by \eqref{eq:min length} while
$L_i \leq n-2$; in particular $n-\omega(G) \leq n-2$.

The entry $\beta_{1,2}$ is additive over the Boij--S\"oderberg decomposition, so
by \eqref{eq:strand entries}
\[
  m\binom{n}{2} \;=\; \sum_i \alpha_i\,\frac{a_i-2}{a_i}\binom{L_i+2}{2}.
\]
Both $\tfrac{a-2}{a}$ and $\binom{L+2}{2}$ increase in their arguments, so each
summand is at least $\tfrac{g-2}{g}\binom{n-\omega(G)+2}{2}$; as
$\sum_i \alpha_i = 1$, the same bound holds for the sum, giving the first
inequality of \eqref{eq:hole exact}. 
Furthermore
\[ \frac{\binom{n-\omega +2}{2}}{\binom{n}{2}} = 
\frac{(n(1-\kappa) +2)((n-1)(1-\kappa) + 2-\kappa)}{n(n-1)} 
= (1-\kappa)^2 + \text{ positive terms } \]
giving the second inequality. 
The limit statement is immediate, 
and \eqref{eq:hole bound}
follows on setting $m = 1-\varepsilon$ and solving for $\kappa$.
\end{proof}

As $g$ ranges from $4$ to $\infty$ the bound \eqref{eq:hole bound} interpolates
between the two curves already in play. At $g = 4$ it asks nothing beyond
$C_4$-freeness and gives $\kappa \geq 1-\sqrt{2}\,\sqrt{1-\varepsilon}$, the
branch to which the bound of this section is asymptotic as $\varepsilon \to 1$;
as $g \to \infty$ it returns the chordal bound \eqref{eq:chordal}, the hole-free
case.

How close is \eqref{eq:hole bound} to the truth? The circulant graphs of
Example~\ref{ex:circulant graph} generalize to a family that tests it.
The circulant graph $C^k_{gk+1}$ is $2$-Leray and has no holes in
the range $[4,g]$. The join $C^k_{gk+1} \ast K_r$ has vertex number, clique number, and
edge number respectively:

\[ (i)\,\, n = gk+1+r, \quad 
(ii)\, k+1+r = n-(g-1)k, \quad (iii)\, \, \binom{n}{2} - \frac{(gk+1)(g-2)k}{2}. \]
So the clique density is $\kappa = 1 - (g-1) \frac{k}{n}$.
From (\emph{i}) one works out 
\[ \frac{k}{n}  = \frac{1}{g} - \frac{1+r}{gn} = \frac{1}{g} - \frac{1+r}{g(gk+r+1)}.\]
As $k$ gets larger, and varying $r$, this becomes more and more dense in the interval 
$(0,\frac{1}{g})$. The clique density $\kappa$ above correspondingly becomes dense in the interval $(\frac{1}{g},1)$. 

Concerning the edge density, by (\emph{iii}) it is
\[ \varepsilon = 1 - (g^2-2g) (\frac{k}{n})^2 + O(\frac{1}{n}).\]
This gives 
\[ \frac{(1-\kappa)^2}{1 -\varepsilon} = \frac{(g-1)^2}{g^2-2g} + O(\frac{1}{n}).\]
Furthermore, as $\kappa$ ranges over $(\frac{1}{g},1)$, one sees that $\varepsilon$ ranges over
$(\frac{2}{g},1)$. Solving for $\kappa$ this gives 
\[ \kappa = 1 - \sqrt{\frac{(g-1)^2}{g^2-2g}} \sqrt{1-\varepsilon} + O(\frac{1}{n}).\]

Comparing the bound \eqref{eq:hole bound}, and circulant graphs as $n \to \infty$ one has:
\[ \text{Bound: }   \kappa \;\geq\; 1 - \sqrt{\frac{g^2}{g^2-2g}}\;\sqrt{1-\varepsilon},
\quad \text{Circulant joins: } 
\kappa = 1 - \sqrt{\frac{(g-1)^2}{g^2-2g}} \sqrt{1-\varepsilon}.\]

\begin{conj*} Let $g \geq 3$ be an integer. For $2$-Leray graphs over a field $\Bbbk$
with no holes in the range $[4,g]$, if the edge density $\varepsilon \in (\frac{2}{g}, 1)$,
the clique density is asymptotically (as $n \to \infty$) bounded by 
\[ \kappa \geq 1 - \sqrt{\frac{(g-1)^2}{g^2-2g}} \sqrt{1-\varepsilon}.\]
\end{conj*}

\paragraph{The Leray hypothesis and open questions}
The lower bound of Theorem~\ref{thm:triangle lower bound} is unconditional. The
upper bound of Theorem~\ref{thm:triangle upper bound}, and hence the
clique-density bound above, is proved only under the assumption that the clique
complex is $2$-Leray over $\Bbbk$. No $C_4$-free graph is known to us that
violates \eqref{eq:upper}, and we do not know whether the hypothesis can be
dropped. This leads to Question \ref{qu:tau bound} in the introduction.

\medskip
More broadly, \eqref{eq:upper} is one instance of a single extremal problem for
the triangle density: determine the optimal function $f$ with
\[
  \tau \;\leq\; f(\varepsilon,\kappa)
\]
for every $C_4$-free graph, the largest triangle density attainable at prescribed
edge and clique-number densities. Theorem~\ref{thm:triangle upper bound} supplies
an upper bound for $f$ on the $2$-Leray subclass; the question above is whether that 
same bound holds for $f$ on
the whole class.

Some evidence for this comes from graphs whose clique complexes carry homology
in dimension $2$ or higher, and so fall outside the scope of
Theorem~\ref{thm:triangle upper bound}. The $1$-skeleton of the icosahedron is
$C_4$-free, and its clique complex is a flag triangulation of $S^2$, so
$\tilde H_2 \neq 0$ and the complex is not $2$-Leray. Here $\kappa = \tfrac14$,
$\varepsilon = \tfrac{5}{11}$, and $\tau = \tfrac{1}{11}$; inequality
\eqref{eq:upper} holds comfortably, with slack $\tfrac{13}{64}$ between its two
sides in the reduced form
$\tau(1+2\kappa-\kappa^2) \leq (3-\kappa^2)\kappa\varepsilon$. Likewise the
$1$-skeleton of the $600$-cell is $C_4$-free, with clique complex a flag
triangulation of $S^3$, so $\tilde H_3 \neq 0$; here $\kappa = \tfrac{1}{30}$,
$\varepsilon = \tfrac{12}{119}$, and $\tau = \tfrac{30}{7021}$, and
\eqref{eq:upper} again holds, now with the much smaller slack
$\tfrac{43658}{7898625} \approx 5.5\times 10^{-3}$. Thus the bound persists on
these two flag spheres even though the homological hypothesis fails.

Moreover, neither example is isolated: each seeds an infinite supply of
further examples, produced by a single operation that preserves
$C_4$-freeness, non-$2$-Lerayness, and \eqref{eq:upper} together. Let $C$
be a clique of $G$ of size $k$, and let $s\ge k$. Write $G\oplus_C K_s$ for
the clique-sum of $G$ with $K_s$ along $C$: one adjoins $s-k$ mutually
adjacent new vertices, each joined to every vertex of $C$ and to nothing
else. Taking $k=3$, $s=4$ is the case of stellar subdivision of a triangle.

\begin{theorem}\label{thm:cliquesum}
Let $G$ be a $C_4$-free graph on $n\ge 3$ vertices, $C$ a clique of $G$
with $|C|=k$, and $s\ge k$. Then $G\oplus_C K_s$ is $C_4$-free, contains $G$
as an induced subgraph---so if $X(G)$ is not $2$-Leray then neither is
$X(G\oplus_C K_s)$---and, if $G$ satisfies \eqref{eq:upper}, then so does
$G\oplus_C K_s$.
\end{theorem}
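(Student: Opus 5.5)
The structural claims come first and are elementary. For $C_4$-freeness of $G' = G\oplus_C K_s$, let $A$ be the set of $t=s-k$ new vertices. Each $v\in A$ has closed neighbourhood $A\cup C$, which is a clique, so $v$ is simplicial in $G'$. A simplicial vertex cannot lie on an induced cycle of length $\geq 4$, because its two cycle-neighbours would be adjacent. Hence every induced $C_4$ of $G'$ avoids $A$ and so lies in $G$, which has none. $G$ is the subgraph of $G'$ induced on $V(G)$. Therefore $X(G)$ is the induced subcomplex $X(G')[V(G)]$, and any nonvanishing $\tilde H_p$ with $p\ge 2$ of an induced subcomplex of $X(G)$ survives in $X(G')$. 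So $X(G')$ is not $2$-Leray whenever $X(G)$ is not.

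For the inequality, I would add the vertices of $A$ one at a time. Each step adjoins a vertex whose neighbourhood is a clique $D$ of size $j$ with $k\le j\le s-1$, and it adds $j$ edges and $\binom j2$ triangles. Write $f(x)=\frac{(3-x^2)x}{1+2x-x^2}$ and clear the binomial normalizations in \eqref{eq:upper}. The inequality then reads $T\le g(n,\omega)\,E$, where $T$ is the number of triangles and
\[
  g(n,\omega)=\frac{n-2}{3}\,f\!\left(\frac{\omega}{n}\right)=\frac{\omega(n-2)}{3n}\,h\!\left(\frac{\omega}{n}\right),
  \qquad h(x)=\frac{3-x^2}{1+2x-x^2}.
\]
The first step is to check that $f$ is increasing on $[0,1]$ and that $h$ is decreasing there. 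From this, $g$ is nondecreasing in $n$ for fixed $\omega$ and nondecreasing in $\omega$ for fixed $n$. After one step the clique number $\omega'=\max(\omega,j+1)\ge\omega$ and $n$ increases, so the old triangles satisfy $T\le g(n,\omega)E\le g(n+1,\omega')E$. It then suffices that the new triangles satisfy $\binom j2\le g(n+1,\omega')\,j$, that is, $\frac{j-1}{2}\le g(n+1,\omega')$, with $j\le\omega'-1$.

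I expect this last per-step estimate to be the main obstacle. Asymptotically, with $x=\omega'/n$, it reads $x/2\le f(x)/3$. This is equivalent to $x^2-6x+3\ge 0$, i.e.\ $x\le 3-\sqrt6$, so it fails when the added clique is a large fraction of the vertex set. Two kinds of slack are available. First, $j\le s-1$ while $\omega'\ge s$, and the factor $(n-2)/n$ is only lost at small $n$. Second, and more substantially, the inequality does not need to hold vertex by vertex.

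My first attempt would therefore treat the whole block $A$ at once, using $\omega'\ge s$ and $n'=n+t$. The added triangle count is $\binom t3+\binom t2k+t\binom k2$ and the added edge count is $\binom t2+tk$; these add to the old $T$ and $E$. I would then prove $T'\le g(n',\omega')E'$ by splitting into two cases. If $s\le\omega$, then $\omega'=\omega$ and $s/n'$ is small relative to $\omega/n'$, so the per-step estimate applies with room. If $s>\omega$, the new clique dominates; I would bound the old part crudely by $T\le\frac{\omega-2}{3}E$, which holds since every edge lies in at most $\omega-2$ triangles. The remaining comparison is then a polynomial inequality in $n,k,t$ (and $E$ linearly), which I would verify by substituting nonnegative variables, as in the proof of Lemma~\ref{lem:halfplane}. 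If this direct block estimate still fails in the regime where $s/n'$ exceeds $3-\sqrt6$, I would instead use that $f(x)\ge x$ on $[0,1]$ together with the exact count of edges inside $A\cup C$ to absorb the deficit.
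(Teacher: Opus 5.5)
Your structural argument (each new vertex is simplicial, hence lies on no induced cycle of length at least four, and $G$ is induced on $V(G)$) is correct and is the paper's.

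The inequality part, however, has a genuine gap, and your first reduction creates it. Writing $T\le g(n,\omega)E\le g(n+1,\omega')E$ throws away the increment $\bigl[g(n+1,\omega')-g(n,\omega)\bigr]E$ on the old edges, and that increment is precisely what pays for the new triangles. Without it the per-step estimate fails not only for large new cliques but already in your case $s\le\omega$, which you say goes through ``with room''. Take $G=K_n$ (equality in \eqref{eq:upper}), $|C|=n-1$ and $s=n$. You would need $\frac{n-2}{2}\le g(n+1,n)=\frac{n-1}{3}f\bigl(\frac{n}{n+1}\bigr)<\frac{n-1}{3}$, which is false for every $n\ge 4$, although $K_{n+1}$ minus an edge does satisfy \eqref{eq:upper}.

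A second example shows the increment is indispensable. For $G=K_k$, $C=V(G)$ and $s>k$ one gets $K_s$, where \eqref{eq:upper} is an equality. The surplus of the new triangles over $g(s,s)$ times the new edges is $\frac{s-k}{3}\binom{k}{2}$. It is cancelled exactly by the old-edge increment $\bigl[g(s,s)-g(k,k)\bigr]\binom{k}{2}$, so no argument that discards this increment can succeed.

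Your case $s>\omega$ moreover rests on a false lemma. In a $C_4$-free graph it is the common neighbourhood of a \emph{non}-edge that must be a clique, not that of an edge. So $T\le\frac{\omega-2}{3}E$ already fails for the diamond $K_4$ minus an edge ($\omega=3$, $E=5$, $T=2$), and badly for $K_2\ast\overline{K_m}$.

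The missing ingredient is a lower bound on the old edge count to set against the deficit: $G$ contains $K_\omega$, so $E\ge\binom{\omega}{2}$. Keeping the increment, a single cone over a $k$-clique (to which you correctly reduce) preserves \eqref{eq:upper} as soon as
\[
  \binom{k}{2}-g(n+1,\omega')\,k\;\le\;\bigl[g(n+1,\omega')-g(n,\omega)\bigr]\binom{\omega}{2},
  \qquad \omega'=\max(\omega,k+1).
\]
This is an inequality in $n,\omega,k$ alone, which the paper verifies separately for $k\le\omega-1$ and $k=\omega$. It is exactly what the paper's identity $D\cdot N(Q_C(G))=\lambda N(G)+\nu\bigl(e-\binom{\omega}{2}\bigr)+R$ encodes: $\nu\ge 0$ is your monotonicity of $g$, and $R\ge 0$ is the displayed inequality.
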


\begin{proof}[Sketch]
It suffices to treat $s=k+1$, a single cone $Q_C(G)$ over $C$: gluing
$K_s$ is the same as coning $s-k$ times, each new vertex placed on the
clique formed by $C$ and the vertices already added, so the general case
follows by iterating this one.

Write $v$ for the coned vertex. It is adjacent to the clique $C$, so any
two of its neighbours are adjacent; as the two vertices opposite across $v$
on an induced $4$-cycle would be nonadjacent, $v$ lies on no such cycle,
and a $4$-cycle among the old vertices would already lie in the $C_4$-free
graph $G$. Since the cone adds no edges within $V(G)$, the subcomplex of
$X(Q_C(G))$ induced on $V(G)$ is $X(G)$, and $2$-Lerayness is inherited by
induced subcomplexes, giving the first two claims. For the third, write
$N(n,e,t,\omega)$ for the numerator of \eqref{eq:upper} cleared of
denominators, so the hypothesis is $N\ge 0$. The cone over a $k$-clique
sends
\[
  (n,e,t,\omega)\;\longmapsto\;
  \Bigl(n+1,\;e+k,\;t+\tbinom{k}{2},\;\max(\omega,k+1)\Bigr).
\]
One then checks a polynomial identity
$D\cdot N(Q_C(G)) = \lambda\,N(G) + \nu\bigl(e-\tbinom{\omega}{2}\bigr) + R$
with $D=3n(n^2+2n\omega-\omega^2)>0$ and $\lambda,\nu,R$ nonnegative on the
range $1\le k\le\omega<n$, in two cases according to whether $k\le\omega-1$
(the clique number is unchanged) or $k=\omega$ (it rises by one). Since
$G$ contains $K_\omega$, we have $e\ge\binom{\omega}{2}$, and the identity
gives $N(Q_C(G))\ge 0$.
\end{proof}

Starting from the icosahedron and coning repeatedly---at each step over any
clique of the current graph---produces $C_4$-free graphs on arbitrarily
many vertices, none of them $2$-Leray, and all satisfying \eqref{eq:upper}; each
retains the icosahedron as an induced subcomplex, so $\tilde H_2\neq 0$
throughout. Starting from the $600$-cell gives graphs with
$\tilde H_3\neq 0$ instead. The clique coned at each step is free to be
chosen, so these are not single families but branching supplies of examples
of unbounded size. In particular the bound is not an accident of the two
smallest flag spheres.

These families stay within homological dimensions~$2$ and~$3$,
but the phenomenon reaches higher. For every $d$ there exist
$C_4$-free graphs whose clique complexes have nonvanishing reduced homology in
dimension $d$. Indeed, a clique complex
has no induced $4$-cycle in its $1$-skeleton precisely when it is
\emph{flag-no-square} (equivalently $5$-large) in the sense of Januszkiewicz and
Swi\k{a}tkowski. They constructed flag-no-square complexes with nonvanishing
homology in every dimension, in order to produce hyperbolic Coxeter groups of
arbitrarily large virtual cohomological
dimension~\cite{JanuszkiewiczSwiatkowski2003}. Osajda~\cite{Osajda2013} later gave
a simpler recursive construction, in which the dimension of nonvanishing homology
increases by one at each step. These constructions are indirect, however:
they produce no explicit graph on which \eqref{eq:upper} could be tested, and any
such graph is necessarily large. The vertex count grows at least exponentially in
the homological dimension, as the following refinement of a bound of Adiprasito,
Nevo, and Tancer records (exponential growth also follows from the logarithmic
regularity estimates of Dao, Huneke, and Schweig~\cite{DaoHunekeSchweig2013}).

\begin{lemma}\label{lem:homology vertices}
For $d \geq 1$ let $f(d)$ be the least number of vertices of a $C_4$-free graph
$G$ whose clique complex $X(G)$ has nonvanishing reduced homology
$\tilde H_d(X(G); \Bbbk)$. Then
\[
  f(d) \;\geq\; 2^{d+1} + d .
\]
\end{lemma}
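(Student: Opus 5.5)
The plan is induction on $d$, using minimal induced supports of homology together with the observation behind Lemma~\ref{lem:t2 upper C4free}: common neighbourhoods of nonadjacent vertices are cliques. I expect to prove the intermediate bound $f(d)\geq 2f(d-1)+2-d$, which unwinds to the stated estimate. The base case is $d=1$. A flag complex with $\tilde H_1\neq 0$ contains an induced cycle of length at least $4$, because a shortest cycle carrying a nonzero class can be taken to be induced. Holes of length $4$ are excluded, so $f(1)\geq 5=2^{2}+1$.

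For the inductive step, let $G$ be $C_4$-free with $\tilde H_d(X(G);\Bbbk)\neq 0$. Pass to a vertex set $W$ that is minimal with $\tilde H_d(X(G)[W])\neq 0$, and replace $G$ by $G[W]$; this is still $C_4$-free. For a vertex $u$, the complex $X(G)$ is the union of $X(G-u)$ and the cone over $\operatorname{lk}u$. Since the complex is flag, $\operatorname{lk}u=X(G[N(u)])$. In the Mayer--Vietoris sequence
\[
\tilde H_d(X(G-u))\to \tilde H_d(X(G))\to \tilde H_{d-1}(X(G[N(u)])),
\]
the left term vanishes by minimality, so the right term is nonzero. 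Hence $G[N(u)]$ is a $C_4$-free graph with nonvanishing $\tilde H_{d-1}$, and $|N(u)|\geq f(d-1)$ for every vertex $u$. The vertex $u$ cannot be adjacent to all other vertices, since otherwise $X(G)$ would be a cone and hence acyclic. So there is a vertex $w\neq u$ not adjacent to $u$.

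By $C_4$-freeness, $K=N(u)\cap N(w)$ is a clique. Counting $u$, $w$ and the union of their neighbourhoods gives
\[
n\;\geq\; |N(u)|+|N(w)|-|K|+2\;\geq\; 2f(d-1)+2-|K|.
\]
If $|K|\leq d$, this yields $f(d)\geq 2f(d-1)+2-d$. With the induction hypothesis $f(d-1)\geq 2^{d}+d-1$, this gives exactly $f(d)\geq 2^{d+1}+d$.

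The main obstacle is the bound $|K|\leq d$. I would first strengthen the induction by working with minimal supports: shrink $N(u)$ to a minimal $U\subseteq N(u)$ with $\tilde H_{d-1}(X(G)[U])\neq 0$, and similarly shrink $N(w)$ to $U'$. The counting then only needs $|U\cap U'|\leq d$, and $U\cap U'$ is a clique inside a minimal $(d-1)$-support. In a minimal $(d-1)$-cycle support every vertex lies in the support of a cycle, so the cycle restricted to a simplex should have bounded size. Concretely, a clique of size $d+2$ would span a $(d+1)$-simplex $\sigma$, and I would try to show via a Mayer--Vietoris argument on $\sigma$ that one of its vertices could then be deleted without killing the class, contradicting minimality. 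Deleting one vertex only gives $|U\cap U'|\leq d+1$ directly, which leaves a lost additive unit at each level. I would try to recover it by choosing the pair $(u,w)$ to minimise the overlap, or by using that $u$ and $w$ themselves lie outside $U\cup U'$.

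If the clique bound resists, the fallback is to carry an auxiliary count through the induction rather than the sharp clique bound. This is the refinement of the Adiprasito--Nevo--Tancer argument that the statement alludes to.
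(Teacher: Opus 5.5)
Your reduction is sound up to the clique bound. The passage to an inclusion-minimal support, the Mayer--Vietoris step giving $|N(u)|\ge f(d-1)$ for every $u$, the existence of a non-neighbour $w$, the count $n\ge |N(u)|+|N(w)|-|K|+2$, and the arithmetic $2(2^d+d-1)+2-d=2^{d+1}+d$ are all correct. But the inequality $|K|\le d$ carries the entire content, and you do not prove it. It is as strong as what the paper imports: the non-neighbours of $u$ include $w$ and $N(w)\setminus K$, so $|K|\le d$ would give $u$ at least $1+f(d-1)-d\ge 2^d$ non-neighbours, which is the Adiprasito--Nevo--Tancer estimate. Worse, the bound is false for an arbitrary non-adjacent pair in a minimal support, already at $d=2$. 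Take a flag $C_4$-free $2$-sphere $P$ with a flat triangular-lattice patch (a high-frequency icosahedral geodesic dome will do). In the patch, let $xpp'$ be a triangle, let $y$ be the other apex over $pp'$ and $z$ the other apex over $xp$, and add the edges $xy$ and $p'z$. This glues the tetrahedra $xypp'$ and $zxpp'$ onto $P$, each along a disk of two triangles, so the clique complex is still homotopy equivalent to $S^2$. Mayer--Vietoris shows $V(P)$ is still an inclusion-minimal support of $\tilde H_2$, and a local check in the lattice shows no induced $C_4$ is created. Yet $y\not\sim z$ and $N(y)\cap N(z)=\{x,p,p'\}$ has three vertices.

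Your refinement via minimal supports $U\subseteq N(u)$, $U'\subseteq N(w)$ does rescue $d=2$, since minimal $\tilde H_1$-supports are holes and a clique in a hole has at most two vertices. For $d\ge 3$, however, you would need the clique $U\cap U'$ to have at most $d$ vertices, and the example above shows a minimal $\tilde H_2$-support can contain a $K_4$. The Mayer--Vietoris-on-a-simplex idea is left as a hope. Even the bound $d+1$, which you also do not establish, only gives $f(d)\ge 2f(d-1)+1-d$, i.e.\ $f(d)\ge 3\cdot 2^{d-1}+d+1$, which falls short of the claim. The missing ingredient is the one the paper uses: by Adiprasito--Nevo--Tancer, some vertex $v$ has at least $2^d$ non-neighbours. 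Take $G$ of minimum size; your own Mayer--Vietoris step gives $|N_v|\ge f(d-1)$. Since $\{v\}$, $N_v$ and the non-neighbours of $v$ are disjoint, $f(d)\ge f(d-1)+2^d+1$ follows with no overlap of two neighbourhoods to control. With $f(1)=5$ this sums to $2^{d+1}+d$.
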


\begin{proof}
For a vertex $v$ of $G$ write $N_v$ for its set of neighbors; in $X(G)$ this is
the vertex set of the link of $v$. Adiprasito, Nevo, and Tancer show, in the
course of proving \cite[Theorem 5.1]{AdiprasitoNevoTancer2020}, that if $X(G)$ has
nonvanishing reduced homology in dimension $d$ then $G$ has a vertex $v$ with
$|V(G) \setminus (N_v \cup \{v\})| \geq 2^d$; that is, some vertex has at least
$2^d$ non-neighbors besides itself.

Let $G$ realize $f(d)$, and let $v$ be such a vertex. Applying the Mayer--Vietoris
sequence to the cover of $X(G)$ by the deletion $X(G) \setminus \{v\}$ and the
closed star of $v$, whose intersection is the link of $v$, propagates nonvanishing
homology in dimension $d$ to a subcomplex on the neighbors of $v$ carrying
homology in dimension $d-1$; that subcomplex is again $C_4$-free, so it has at
least $f(d-1)$ vertices. Counting $v$, its $\geq 2^d$ non-neighbors, and the
vertices supporting the lower-dimensional homology yields
\[
  f(d) \;\geq\; f(d-1) + 2^d + 1 .
\]
Since $f(1) = 5$ (the smallest $C_4$-free graph with a nonzero first homology of
its clique complex is the pentagon $C_5$), induction gives
$f(d) \geq 5 + \sum_{i=2}^{d}(2^i + 1) = 2^{d+1} + d$.
\end{proof}

Theorem~\ref{thm:cliquesum} glues on a complete graph. We do not know
whether \eqref{eq:upper} is preserved under the clique-sum of $G$ with an
arbitrary $C_4$-free graph, nor under the join $G\ast K_s$ that adjoins $s$
universal vertices.

A further open problem is whether the sandwich bound is tight. The circulant
construction at $\varepsilon = \tfrac12$ leaves a gap, and it would be
interesting to determine the true extremal clique-number density for $C_4$-free
graphs, either at $\varepsilon = \tfrac12$ or across the whole range.
The method of this paper reduces such questions to understanding which Betti
tables, subject to the edge ideal and Leray constraints, actually arise from
clique complexes of $C_4$-free graphs.

\subsection*{How far the strand method reaches}

One might hope that relaxing the $2$-Leray hypothesis of
Theorem~\ref{thm:triangle upper bound} one homological dimension at a time yields
a corresponding family of bounds. The upper bound was obtained by confining
$B(G)$ to two linear strands and bounding, over all admissible pure diagrams, the
density pair $(m,b) = \bigl(\beta_{1,2}/\binom n2,\ \beta_{2,3}/\binom n3\bigr)$ by
a line through the point $(1,2)$ and the extremal two-strand diagram; the
triangle density then followed from $\tau = b - 3m + 1$. The same construction
makes sense for any number $k$ of strands. Allowing rows $d = j-i \in \{0,1,\dots,k\}$
admits diagrams whose second row enters late, and Lemma~\ref{lem:homology vertices}
bounds how late: a nonzero entry in row $r$ cannot occur before internal degree
$2^{r}+r-2$. The extremal $k$-strand diagram---the one minimal in the
Boij--S\"oderberg order whose higher strands open as early as this permits---then
occupies a definite point
\[
  P^\ast_k \;=\; \bigl(\gamma_k\,\ell^2,\ \delta_k\,\ell^3\bigr),
  \qquad
  \gamma_k = \prod_t \frac{a_t-2}{a_t},
  \quad
  \delta_k = 2\prod_t \frac{a_t-3}{a_t},
\]
where $\ell = 1-\kappa$ and the $a_t$ are the internal degrees at which successive
strands open, namely $a_1 = 4$ and $a_r = 2^{r}+r-3$ for $r \geq 3$. The line
through $(1,2)$ and $P^\ast_k$ is the candidate $k$-strand estimate; for $k=2$ it
is the line of Lemma~\ref{lem:halfplane}, with $P^\ast_2 = (\tfrac12\ell^2,
\tfrac12\ell^3)$.

For $k=3$ this candidate is in fact a theorem. The extremal point is
$P^\ast_3 = (\tfrac38\ell^2, \tfrac{5}{16}\ell^3)$, and the line through it and
$(1,2)$ dominates every admissible three-strand pure diagram at the exact entries;
the proof is the same reduction to two endpoints as
Lemma~\ref{lem:halfplane}, the endpoint at the shorter length again reducing to a
polynomial with nonnegative coefficients after the substitution $q = 8+B+u$,
$n = q+s$ that encodes the admissible ranges $a_1 \geq 4$, $a_2 \geq 8$. The
resulting bound,
\[
  \tau \;\leq\;
  \frac{\varepsilon\,(5\kappa^3 + 3\kappa^2 - 21\kappa - 3)}
       {2\,(3\kappa^2 - 6\kappa - 5)},
\]
holds for every $C_4$-free graph whose clique complex is $3$-Leray. It is weaker
than \eqref{eq:upper}, as it must be, since it applies to the larger class;
combined with Theorem~\ref{thm:triangle lower bound} it gives a clique-density
curve lying below the one of Figure~\ref{fig:bounds}. We have not sought exact
certificates for $k \geq 4$, where the endpoint polynomials grow rapidly and the
requisite positivity is no longer a routine check.

Even for $k \geq 4$, where we do not prove that the line is a valid bound, we can
still say exactly where that line would sit. Its two defining points are $(1,2)$
and the extremal point $P^\ast_k$, and $P^\ast_k$ is fixed by
Lemma~\ref{lem:homology vertices} alone---its coordinates depend only on the
degrees $a_t$ at which the strands open, not on any positivity certificate. As $k$
grows these points converge:
\[
  \gamma_k \longrightarrow \gamma_\infty = 0.29298\ldots,
  \qquad
  \delta_k \longrightarrow \delta_\infty = 0.21402\ldots
  \qquad (k \to \infty).
\]
Consequently the lines through $(1,2)$ and $P^\ast_k$ approach a definite limiting
line, and no bound obtainable from this construction---for any $k$, and even
granting every certificate one might hope to prove---can lie above the
clique-density curve it produces. That limiting curve is drawn in
Figure~\ref{fig:limit}. It meets the axis $\kappa = 0$ at
$\varepsilon_\infty = 0.4873\ldots$, the increasing sequence of intercepts
$\tfrac13, \tfrac{5}{12}, \dots$ of the successive curves converging to this
value; below $\varepsilon_\infty$ the construction says nothing at all. In
particular it gives no information at the extremal edge density
$\varepsilon = \tfrac12$, where the circulant construction of
Example~\ref{ex:circulant graph} leaves the principal gap. The limiting curve is
not, however, uniformly weaker than the general bound \eqref{eq:holmsen}: the two
cross near $\varepsilon = 0.77$, and for larger edge densities the limiting curve
lies above \eqref{eq:holmsen}, by a margin rising to about $0.019$ in $\kappa$
near $\varepsilon = 0.93$. Thus even pushed to its limit the construction would
improve on the previously known bound, but only in the high-density range, and
never near $\varepsilon = \tfrac12$.

We stress that this ceiling is a limitation of \emph{the particular
construction} of this paper, not of Boij--S\"oderberg theory applied to the problem. The line
lives in the $(m,b)$-plane spanned by the two first-strand entries $\beta_{1,2}$
and $\beta_{2,3}$; a diagram with three or more strands carries further entries,
such as a third-strand value $\beta_{3,\ast}$, that this projection discards. A
sharper argument might retain such an entry and work in an enlarged
$(m,b,t)$-space, where constraining $t$ could exclude points that the planar
projection cannot, or might exploit that not every diagram admissible under
Lemma~\ref{lem:homology vertices} arises from an actual clique complex. We do not
know whether either route helps. What the limiting curve does show is that the
present idea, refined only by admitting more strands, cannot reach
$\varepsilon = \tfrac12$: progress there requires an ingredient outside this
family.

\begin{figure}[htbp]
    \centering
\begin{tikzpicture}[scale=0.85]
\begin{axis}[
    width=0.8\linewidth, height=0.8\linewidth,
    xmin=0, xmax=1.05, ymin=0, ymax=1.05,
    axis lines=middle,
    xlabel={$\varepsilon$},
    ylabel={$\kappa$},
    xlabel style={at={(axis description cs:1.04, 0.0)},anchor=north east},
    ylabel style={at={(axis description cs:-0.02,0.99)},anchor=south,rotate=0},
    xtick={0,0.2,0.4,0.6,0.8,1.0},
    ytick={0,0.2,0.4,0.6,0.8,1.0},
    minor tick num=1,
    grid=both,
    major grid style={line width=.3pt,draw=gray!40},
    minor grid style={line width=.15pt,draw=gray!20},
    tick label style={font=\small},
    label style={font=\small},
    samples=200,
    clip=true,
]
\addplot[cChordal, thick, domain=0:1] {1 - sqrt(1-x)};
\addplot[cHolmsen, thick, domain=0:1] {(1 - sqrt(1-x))^2};
\addplot[cLow, thick, samples=200, domain=0.0001:0.221992, variable=\k]
    ( { (sqrt(\k)*sqrt(\k^4 - 3*\k^3 + 5*\k + 1) - \k*(\k-1)) / (1 + 2*\k - \k*\k) } , \k );
\addplot[cHigh, thick, samples=200, domain=0.221992:0.9999, variable=\k]
    ( { (-3*\k^3 + 5*\k^2 + 5*\k + 1) / (-\k^3 + 3*\k^2 + 3*\k + 3) } , \k );
\addplot[cHigh, line width=1pt, dashed] coordinates {
(0.3020,0.0005) (0.3490,0.0137) (0.3960,0.0300) (0.4429,0.0494) (0.4899,0.0720)
(0.5369,0.0982) (0.5839,0.1343) (0.6309,0.1768) (0.6779,0.2226) (0.7248,0.2726)
(0.7718,0.3282) (0.8188,0.3916) (0.8658,0.4664) (0.9128,0.5597) (0.9598,0.6916)
(0.9833,0.7965)};
\addplot[cLow, line width=1pt, dotted] coordinates {
(0.4770,0.0013) (0.5121,0.0192) (0.5472,0.0473) (0.5824,0.0768) (0.6175,0.1079)
(0.6526,0.1409) (0.6877,0.1762) (0.7228,0.2142) (0.7579,0.2557) (0.7931,0.3015)
(0.8282,0.3529) (0.8633,0.4120) (0.8984,0.4821) (0.9335,0.5702) (0.9687,0.6947)
(0.9862,0.7926)};
\addplot[gray, thin, dashed] coordinates {(0.7698,0) (0.7698,0.2706)};
\end{axis}
\end{tikzpicture}
    \caption{The horizon of the strand construction. Solid: the chordal bound
    (blue), the general $C_4$-free bound \eqref{eq:holmsen} of Holmsen (red), and
    the proven bound of Figure~\ref{fig:bounds} (purple/black, here the $k=2$
    case). Dashed: the proven $3$-Leray bound ($k=3$). Dotted: the limiting curve
    of the construction as $k \to \infty$. The limiting curve gives no
    information left of $\varepsilon_\infty = 0.487\ldots$; it crosses \eqref{eq:holmsen} near
    $\varepsilon = 0.77$ (grey line) and improves on it only for larger
    $\varepsilon$.}
    \label{fig:limit}
\end{figure}

\end{document}